\newtheorem{thm}{Theorem}
\newtheorem{lem}[thm]{Lemma}
\newtheorem{prop}[thm]{Proposition}
\theoremstyle{definition}
\newtheorem*{construction}{Construction}
\theoremstyle{remark}
\newtheorem*{rem}{Remark}
\newtheorem*{question}{Question}
\author{Sel\c{c}uk Kayacan
  \thanks{This study was supported by Scientific and Technological Research Council of Turkey (TUBITAK) under the Grant Number 122F490. The author thanks to TUBITAK for their supports.}
}
\title{Subrack lattices of finite solvable and metacyclic groups}
\date{}
\begin{document}

\maketitle

\small

\begin{center}
  Bahçeşehir University, Faculty of Engineering\\ and Natural Sciences,
  Istanbul, Turkey\\
  {\it e-mail:} \href{mailto:selcuk.kayacan@bau.edu.tr}{selcuk.kayacan@bau.edu.tr}
\end{center}

\begin{abstract}
  
  A group $G$ with conjugation operation is a rack. We call such racks \emph{group racks}. In this paper we study finite group racks via their subrack lattices. Heckenberger, Shareshian, and Welker proved that the isomorphism type of the subrack lattice of a finite group determines whether the group is solvable. Our first result shows that if $G$ is a finite solvable group and $H$ is a finite group whose subrack lattice is isomorphic to that of $G$, then $H$ is solvable and the derived length of $H$ has the same derived length as $G$. Our second result is that if $G$ is a finite metacyclic group and $H$ is a group whose subrack lattice is isomorphic to that of $G$, then $H/Z(H)$ is metacyclic. As a further application of our analysis of finite metacyclic groups, we answer a question of Heckenberger, Shareshian, and Welker in the affirmative by constructing two finite groups with isomorphic subrack lattices that are not isomorphic as racks.
  
  \smallskip
  \noindent 2020 {\it Mathematics Subject Classification.} Primary: 20N99; Secondary: 20D10; 08A99.

  \smallskip
  \noindent Keywords: Subrack lattices; derived length; metacyclic groups 

\end{abstract}

\section{Introduction}

A \emph{rack} $X$ is a set with a binary operation $\triangleright\colon X\times X\to X$ satisfying the following two conditions:
\begin{itemize}
\item[(A1)] for all $a,b,c\in X$ we have $a\triangleright (b\triangleright c) = (a\triangleright b)\triangleright (a\triangleright c)$, and
\item[(A2)] for all $a,c\in X$ there exists a unique $b\in X$ such that $a\triangleright b = c$.
\end{itemize}
If, further, the following axiom holds we say $X$ is a \emph{quandle}:
\begin{itemize}
\item[(A3)] for all $a\in X$ we have $a\triangleright a = a$.
\end{itemize}

Let $G$ be a finite group and $S$ be a subset of $G$ that is closed under the conjugation operation $a\triangleright b := aba^{-1}$. The set $S$ with the binary operation $\triangleright$ forms a quandle. We call these racks \emph{conjugation racks}. The following three classes of conjugation racks deserve special attention:
\begin{itemize}
\item the \emph{group rack} $(G,\triangleright)$,
\item the \emph{conjugacy class rack} $(C,\triangleright)$, where $C$ is a conjugacy class of $G$, and
\item the \emph{$p$-power rack $(G_p,\triangleright)$}, where $p$ is a prime dividing the order of $G$ and $G_p$ is the set of all elements of $G$ whose order is a power of $p$.
\end{itemize}

The poset of subracks of a finite rack $X$ ordered by inclusion forms a lattice with unique maximum element $X$ and unique minimum element $\emptyset$ (see \cite[Lemma~2.1]{HSW19}). We call this lattice the \emph{subrack lattice} of $X$ and denote it by $\mathcal{R}(X)$.

The order complex of the subrack lattice of conjugation racks has interesting geometrical properties. For example, basic group theoretical arguments show that, for a group rack $G$ the maximal elements of $\mathcal{R}(G)$ are the union of all but one conjugacy classes of $G$ and this fact can be used to prove that the order complex of $\mathcal{R}(G)\setminus \{G,\emptyset\}$ is homotopy equivalent to a $(c-2)$-sphere, where $c$ is the number of conjugacy classes of $G$ (see \cite[Proposition~1.3]{HSW19}). Further homotopy properties of the subrack lattices of conjugacy class racks and $p$-power racks are studied in \cite{Kay26a,Kay26b}.

Heckenberger, Shareshian, and Welker showed in \cite{HSW19} that the subrack lattice $\mathcal{R}(G)$ of a group rack $G$ contains essential information about the group itself.

\begin{thm}[see {\cite[Theorem~1.1]{HSW19}}]\label{thm:1.1}
  Let $G$, $H$ be ﬁnite groups satisfying $\mathcal{R}(G)\cong \mathcal{R}(H)$.
  \begin{enumerate}
  \item If $G$ is abelian, then $H$ is abelian.
  \item If $G$ is nilpotent, then $H$ is nilpotent.
  \item If $G$ is supersolvable, then $H$ is supersolvable.
  \item If $G$ is solvable, then $H$ is solvable.
  \item If $G$ is simple, then $H$ is simple.
  \end{enumerate}
\end{thm}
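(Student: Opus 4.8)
The plan is to compile a dictionary between purely lattice-theoretic features of $\mathcal{R}(G)$ and group-theoretic invariants of $G$, detailed enough to characterise each of the five properties, and then to transport it along a fixed lattice isomorphism $\varphi\colon\mathcal{R}(G)\to\mathcal{R}(H)$.

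First I would record the elementary entries of this dictionary. Since the group rack is a quandle, every singleton is a subrack, and these are exactly the atoms of $\mathcal{R}(G)$; consequently $\varphi$ restricts to a bijection between the underlying sets of $G$ and $H$, and --- because the number of atoms below a subrack $S$ equals $|S|$ --- the cardinality $|G|$, hence its prime factorisation, is a lattice invariant. By the elementary fact recalled in the introduction (cf.\ \cite[Proposition~1.3]{HSW19}), the coatoms of $\mathcal{R}(G)$ are precisely the complements $G\setminus C$ of the conjugacy classes $C$, so complementation in the atom set recovers the partition of $G$ into conjugacy classes, and with it the class equation and the number of classes. For atoms $\{x\}$, $\{y\}$ the join $\{x\}\vee\{y\}$ is the subrack generated by $x$ and $y$, and it lies above exactly two atoms precisely when $x$ and $y$ commute; hence the commuting graph of $G$ is recovered, together with every centraliser $C_G(x)$ and the subset $Z(G)$ (the atoms joined to all others) --- indeed the element $Z(G)\in\mathcal{R}(G)$ itself, since an element is determined by the atoms beneath it. Finally $S$ is a commuting subset exactly when $[\emptyset,S]$ is a Boolean lattice, so $G$ is abelian if and only if $\mathcal{R}(G)$ is Boolean; this already yields part~(1).

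For parts (2)--(5) I would reduce everything to two recovery assertions, each to be proved by combining Sylow- and class-equation-type counting with the combinatorial and homotopical structure of subrack lattices already in the literature: (i) $\mathcal{R}(G)$ determines which of its elements are \emph{subgroups} of $G$, and hence --- the class partition being known --- which are \emph{normal} subgroups (the subgroup-subracks that are unions of conjugacy classes), which are \emph{Sylow} subgroups, and which are \emph{Hall} subgroups; and (ii) for each normal subgroup $N$, the coset structure of $N$ is recoverable, so that the family of subracks which are unions of $N$-cosets is identifiable and forms a lattice isomorphic to $\mathcal{R}(G/N)$. Granting (i) and (ii), each property becomes a condition on $\mathcal{R}(G)$: $G$ is simple iff its only normal-subgroup elements are the trivial one and $G$; $G$ is nilpotent iff for every prime $p$ it has a unique Sylow $p$-subgroup; $G$ is supersolvable iff some chain of normal-subgroup elements realising a chief series has only cyclic factors, which is visible through the recovered quotient lattices; and $G$ is solvable iff it has, for every set $\pi$ of primes, a Hall $\pi$-subgroup (P.\ Hall's criterion). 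Moreover the same machinery gives the sharpening announced in the abstract: iterating ``pass from a group $N$ to the least normal subgroup $N'$ with $N/N'$ abelian'' --- abelianness of $N/N'$ being the condition that the recovered lattice $\mathcal{R}(N/N')$ be Boolean --- produces the derived series, so its length, the derived length, is read off from $\mathcal{R}(G)$. Since all these conditions live inside $\mathcal{R}(G)$, applying $\varphi$ transfers them verbatim to $H$.

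The genuine obstacle --- and, I expect, the technical heart of the argument --- is assertion (i): there is no obvious intrinsic, lattice-theoretic way to recognise that a given subrack happens to be a subgroup. Even singling out the identity element is delicate, for $\{e\}$ is not separated from the singletons of the other central elements by any of the coarse invariants above. I would approach (i) by first locating, for each prime $p$ dividing $|G|$, the Sylow $p$-subgroups: among the subracks whose order is the full power of $p$ dividing $|G|$, using the Sylow counting congruences together with the Boolean/homotopy type of their down-intervals to separate genuine subgroups from accidental subracks of the same cardinality; then reconstructing from these (and the class partition) the $p$-power racks $G_p$, the lattice of normal subgroups, and finally the coset structure needed for (ii). A recurring point will be to check that each of these reconstructions is canonical, hence respected by $\varphi$. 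Everything else is bookkeeping with the dictionary, but making assertion (i) rigorous is where the real work lies.
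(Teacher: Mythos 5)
A preliminary point: the paper does not prove this theorem at all --- it is quoted verbatim from \cite[Theorem~1.1]{HSW19} --- so your attempt has to stand on its own rather than be compared with an internal argument. Your elementary dictionary is correct and does settle part (1): atoms are singletons, the number of atoms below a subrack is its cardinality, coatoms are complements of conjugacy classes, two distinct atoms join to a two-element subrack exactly when the elements commute, and $[\emptyset,S]$ is Boolean exactly when $S$ is a commuting set, so $G$ is abelian if and only if $\mathcal{R}(G)$ is Boolean. These are precisely the facts the present paper also exploits (in defining $\mathsf{A}(G)$ and in Lemma~\ref{lem:normal}).

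The gap is assertion (i), on which your treatment of parts (2)--(5) entirely rests, and it is not merely the hard step you flag: as stated it is false. For an abelian group $G$ the subrack lattice is the full Boolean lattice $2^{G}$, whose automorphism group is the symmetric group on the atoms, so every lattice-theoretic property of an element of $\mathcal{R}(G)$ depends only on its cardinality. Already for $G=\mathbb{Z}/4\mathbb{Z}$ the subgroup $\{0,2\}$ is indistinguishable from the five other two-element subsets; hence no intrinsic lattice condition can recognise subgroups, Sylow or Hall subgroups, or (non-obvious) normal subgroups, and assertion (ii) --- recovering the coset structure of $N$ and with it $\mathcal{R}(G/N)$ --- fails for the same reason. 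Consequently none of your criteria for (2)--(5) (unique Sylow subgroups, Hall's solvability criterion, chief series with cyclic factors, absence of proper normal-subgroup elements) can be transported along $\varphi$. The proof in \cite{HSW19} avoids subgroup recognition altogether: it extracts invariants that genuinely are lattice-theoretic --- the conjugacy-class partition, the commuting relation, and the set of primes dividing the order of each element --- and combines them with element-wise characterisations of the five classes (for instance, a finite group is nilpotent if and only if any two elements of coprime order commute). The present paper is consistent with this obstruction: its derived-length argument only ever identifies \emph{maximal normal abelian} subgroups (via maximal Boolean intervals together with the class partition, Lemma~\ref{lem:normal}) and recovers the quotient through Lemmas~\ref{lem:coset} and~\ref{lem:partition}, precisely because general subgroups and their cosets are not visible in the lattice.
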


Moreover, the group $H$ is nilpotent of class $c$ if $G$ is nilpotent of class $c$ (see \cite[Theorem~3.3]{Kay21}) and $H$ is $p$-nilpotent if $G$ is $p$-nilpotent and $\mathcal{R}(G)$ satisfies a certain condition (see \cite[Theorem~4.5]{Kay21}).

The \emph{derived length} of a solvable group $G$ is the smallest integer $n$ such that the $n$th higher commutator subgroup $G^{(n)}$ is trivial. Since solvability is determined by the subrack lattice, it is natural to ask whether finer invariants of solvable groups can also be recovered. In this paper we show that the methods developed in \cite{Kay21} also recover this invariant from the subrack lattice of the corresponding group rack.

\begin{thm}\label{thm:main_derived}
  If $G$ is a finite solvable group with derived length $\ell$, and if $H$ is a group such that $\mathcal{R}(G)\cong \mathcal{R}(H)$, then $H$ is solvable with derived length $\ell$.
\end{thm}

Finite solvable groups admit subnormal series whose factor groups are cyclic. Among finite solvable groups, metacyclic groups form one of the first genuinely nontrivial classes. Recall that a group $G$ is called \emph{metacyclic} if it has a cyclic normal subgroup $N$ such that the quotient $G/N$ is also cyclic. Our second main result is the following.

\begin{thm}\label{thm:main_metacyclic}
  Let $G$ and $H$ be finite groups. If $G$ is metacyclic and $\mathcal{R}(G)\cong \mathcal{R}(H)$, then $H/Z(H)$ is metacyclic.
\end{thm}

One cannot, however, expect metacyclicity itself to be detected by $\mathcal{R}(G)$. For example, the abelian groups 
  $$ G = C_4\times C_2\; \text{ and }\; H = C_2\times C_2\times C_2.$$
  have isomorphic subrack lattices, while the former is metacyclic and the latter is not. Thus the most one can reasonably hope for is the recovery of metacyclic structure modulo central factors. A natural next question is whether isomorphic subrack lattices force the corresponding inner automorphism groups to be isomorphic. For instance, the groups $D_8$ and $Q_8$ have isomorphic subrack lattices and satisfy
  $$ D_8/Z(D_8)\cong C_2\times C_2\cong Q_8/Z(Q_8). $$
In \cite{HSW19} Heckenberger, Shareshian, and Welker raised the following stronger variant of this question.

\begin{question}[see {\cite[Question~5.2]{HSW19}}]\label{que:5.2}
  Are there two groups $G$, $H$ which have isomorphic subrack lattices but are non-isomorphic as racks?
\end{question}

We write $G\cong_{\mathsf{rk}}H$ if the groups $G$ and $H$ are isomorphic as racks. Our study of finite metacyclic groups leads to an affirmative answer to this question.

\begin{prop}\label{prop:ex}
  Let
  \begin{align*}
    G_0 &= \langle a,b\mid a^{91} = 1,\ b^3 = 1,\ bab^{-1} = a^9\rangle, \\
    H_0 &= \langle \alpha,\beta\mid \alpha^{91} = 1,\ \beta^3 = 1,\ \beta\alpha\beta^{-1} = \alpha^{16}\rangle.
  \end{align*}
  Then $\mathcal{R}(G_0)\cong \mathcal{R}(H_0)$ but $G_0\not\cong_{\mathsf{rk}}H_0$.
\end{prop}

The paper is organized as follows. In Section~\ref{sec:pre} we review the basic facts on racks needed in the sequel. In Sections~\ref{sec:sol} and~\ref{sec:meta} we prove Theorem~\ref{thm:main_derived} and Theorem~\ref{thm:main_metacyclic}, respectively. In Section~\ref{sec:iso} we construct a bijection from $G_0$ to $H_0$ that induces a lattice isomorphism between their subrack lattices and prove Proposition~\ref{prop:ex}. In the Appendix we present GAP code \cite{GAP4} that verifies the lattice isomorphism from $\mathcal{R}(G_0)$ to $\mathcal{R}(H_0)$.

\section{Preliminary Facts}\label{sec:pre}

Let $X$ be a rack and, for each element $x\in X$, let $\phi_x\colon X\to X$ be the map that takes $y$ to $x\triangleright y$. By axiom (A2), the map $\phi_x$ is a permutation of $X$; hence, by axiom (A1), an automorphism of $X$. Let $\Phi(X) := \{\phi_x\colon x\in X\}$. The inner automorphism group $\mathrm{Inn}(X)$ is the normal subgroup of the automorphism group $\mathrm{Aut}(X)$ that is generated by $\Phi(X)$. We say $X$ is \emph{faithful} if the map $\Phi\colon X\to \Phi(X)$ is injective. For basic facts on racks, see \cite{AG03}; in particular, Lemma~1.7, Lemma~1.9, and Proposition~3.2 there provide alternative statements and proofs of the following proposition. The formulation below appears in \cite[Proposition~2]{Kay25}.

\begin{prop}\label{prop:act}
  Let $X$ be a rack. Then the following statements hold.
  \begin{enumerate}[(i)]
  \item The map $\Phi\colon X\to \Phi(X),\; x\mapsto \phi_x$ is a rack morphism which is an isomorphism if $X$ is faithful.
  \item If $X$ is faithful, then the action of $\mathrm{Inn}(X)$ on $X$ by automorphisms is isomorphic to its action on $\Phi(X)$ by conjugation, and the center of $\mathrm{Inn}(X)$ is trivial.
  \item If $X$ is a conjugation rack and $G := \langle X \rangle$, then the map $\Phi\colon X\to \Phi(X)$ extends to a group homomorphism
  $$\Phi\colon G\to \mathrm{Inn}(X),$$
  and $\mathrm{Inn}(X)\cong G/Z(G)$.
  \end{enumerate}
\end{prop}

In this paper we are mainly interested in finite group racks. If $X=G$ is a group endowed with its conjugation rack structure, then the inner automorphism group of the rack $X$ coincides with the usual inner automorphism group of $G$.

\begin{lem}[see {\cite[Lemma~2.8]{HSW19}}]\label{lem:2.8}
  Let $G$ be a finite group and let $X$ be a subrack of $G$ that contains all non-central conjugacy classes of $G$. A subrack $M$ of $X$ is maximal in $\mathcal{R}(X)$ if and only if $M$ is the union of all but one of the conjugacy classes of $G$ contained in $X$.
\end{lem}

As a consequence of the above Lemma, the combinatorial structure of the subrack lattice $\mathcal{R}(G)$ determines the conjugacy classes of $G$. Another structural fact about finite conjugation racks is that the atoms of the corresponding subrack lattices are exactly the $1$-element subsets of the rack. This, in particular, implies that if $\psi\colon \mathcal{R}(G)\to \mathcal{R}(H)$ is a lattice isomorphism, then the orders of the groups $G$ and $H$ must be the same. In the sequel we will use both of these facts freely without further reference.

\section{Solvable groups}\label{sec:sol}

Let $G$ be a finite group, and let $\mathsf{A}(G)$ denote the set of all elements in the subrack lattice $\mathcal{R}(G)$ satisfying the following property: An element $A\in \mathcal{R}(G)$ belongs to $\mathsf{A}(G)$ if and only if the interval $[\emptyset,A]$ is a maximal Boolean algebra in $\mathcal{R}(G)$ (that is, if $[\emptyset,B]$ is a Boolean algebra for some $B\in \mathcal{R}(G)$ containing $A$ then $A=B$). One can easily observe that an element $A$ of the subrack lattice $\mathcal{R}(G)$ is a maximal abelian subgroup of $G$ if and only if $A$ belongs to $\mathsf{A}(G)$. Since the combinatorial structure of $\mathcal{R}(G)$ determines the conjugacy classes of $G$, we can also characterize the maximal normal abelian subgroups of $G$ via the combinatorial properties of $\mathcal{R}(G)$.

\begin{lem}[see {\cite[Lemma~3.1]{Kay21}}]\label{lem:normal}
  For a finite group $G$, an element $N\in\mathcal{R}(G)$ is a maximal normal abelian subgroup of $G$ if and only if $N$ is the union of all conjugacy classes of $G$ that are contained in some fixed $A\in \mathsf{A}(G)$.
\end{lem}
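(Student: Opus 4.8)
The plan is to pin down, for each maximal abelian subgroup $A\in\mathsf{A}(G)$, the subset
\[
N_A \;:=\; \bigcup\{\,C : C \text{ a conjugacy class of }G \text{ with } C\subseteq A\,\}
\]
and to recognize it as a familiar group-theoretic object. I would first observe that $g\in N_A$ exactly when the whole conjugacy class of $g$ sits inside $A$, i.e.\ when $xgx^{-1}\in A$ for all $x\in G$; rewriting this as $g\in xAx^{-1}$ for all $x$ gives the identity $N_A=\bigcap_{x\in G}xAx^{-1}$. Thus $N_A$ is the normal core of $A$, the largest normal subgroup of $G$ contained in $A$. Being an intersection of conjugates it is normal, and being contained in the abelian group $A$ it is abelian; so every $N_A$ is a normal abelian subgroup of $G$ (and, of course, a union of conjugacy classes). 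This identification is the backbone of the proof and I would establish it at the outset.

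For the forward direction, let $N$ be a maximal normal abelian subgroup. Since $N$ is abelian it lies in some $A\in\mathsf{A}(G)$, and since $N$ is normal we have $N=xNx^{-1}\subseteq xAx^{-1}$ for every $x$, hence $N\subseteq\bigcap_{x\in G}xAx^{-1}=N_A$. As $N_A$ is itself a normal abelian subgroup containing $N$, maximality of $N$ forces $N=N_A$. In fact this works for \emph{any} maximal abelian $A\supseteq N$, so every maximal normal abelian subgroup is realized as $N_A$ for a suitable (indeed arbitrary) choice of $A\in\mathsf{A}(G)$ lying above it.

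The converse is the delicate point and I expect it to be the main obstacle, because a given $N_A$ need not be maximal among normal abelian subgroups---the normal core of a maximal abelian subgroup can be small. The resolution is to compare the whole family $\{\,N_A : A\in\mathsf{A}(G)\,\}$: if $M$ is a normal abelian subgroup with $N_A\subseteq M$, then embedding $M$ in some $A'\in\mathsf{A}(G)$ and arguing exactly as in the forward direction yields $M\subseteq N_{A'}$. Hence any proper normal abelian enlargement of $N_A$ already appears as a strictly larger member $N_{A'}$ of the family. Combined with the forward direction this shows that the maximal normal abelian subgroups of $G$ are precisely the inclusion-maximal sets among the $N_A$; this is the sense in which such an $N$ is cut out by ``some fixed $A\in\mathsf{A}(G)$'', the fixed $A$ being one whose normal core is as large as possible.

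To finish, I would note that this characterization is purely combinatorial: by the remarks preceding the lemma, $\mathsf{A}(G)$ is recognized inside $\mathcal{R}(G)$ as the set of those $A$ for which the interval $[\emptyset,A]$ is a maximal Boolean algebra, and the conjugacy classes of $G$ are recoverable from $\mathcal{R}(G)$; therefore each $N_A$, and with it the collection of maximal normal abelian subgroups, is determined by the isomorphism type of the subrack lattice alone.
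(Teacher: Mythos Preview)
The paper does not supply its own proof of this lemma; it is quoted verbatim from \cite[Lemma~3.1]{Kay21} and cited without argument, so there is nothing in the present paper to compare your proposal against.

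On the mathematics itself: your identification $N_A=\bigcap_{x\in G}xAx^{-1}$ (the normal core of $A$) is exactly the right structural observation, and your forward direction is clean and correct. You are also right to flag the converse as the delicate point. Read literally, the ``if'' direction of the lemma is false: in $S_3$, the maximal abelian subgroup $A=\langle(12)\rangle$ has trivial normal core, yet the trivial subgroup is not a maximal normal abelian subgroup (being properly contained in $\langle(123)\rangle$). Your repair---showing that the maximal normal abelian subgroups are precisely the inclusion-maximal members of the family $\{N_A : A\in\mathsf{A}(G)\}$---is correct and is the form in which the result is actually usable: it gives a characterization that can be read off from $\mathcal{R}(G)$ alone, which is what the proof of Theorem~\ref{thm:main} needs. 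So your proposal is sound as a proof of the intended statement, and your caveat about the literal phrasing is well taken.
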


Given a subset $S$ of $G$, we denote by $\langle S\rangle_{\mathsf{rk}}$ the subrack of the group rack $G$ generated by the elements in $S$.

\begin{lem}[see {\cite[Lemma~3.2]{Kay21}}]\label{lem:coset}
  Let $G$ be a finite group and $N$ be a normal subgroup of $G$. Then any coset of $N$ is a subrack of $G$. Moreover, for any subset  $\{a_1N,\dots,a_kN\}$ of $G/N$, the join of those cosets in $\mathcal{R}(G)$ is the union of the cosets of $N$ containing $\langle a_1,\dots,a_k \rangle_{\mathsf{rk}}$.
\end{lem}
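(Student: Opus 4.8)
The plan is to work with the quotient homomorphism $\pi\colon G\to G/N$, which is simultaneously a homomorphism of group racks since $\pi(a)\triangleright\pi(b)=\pi(aba^{-1})=\pi(a\triangleright b)$. Two elementary facts will do most of the work. First, the preimage under a rack homomorphism of a subrack is again a subrack: if $\pi(a),\pi(b)$ lie in a subrack $T$, then $\pi(a\triangleright b)=\pi(a)\triangleright\pi(b)\in T$, so $a\triangleright b\in\pi^{-1}(T)$. Second, in a \emph{finite} rack a subset is a subrack as soon as it is closed under $\triangleright$: each map $b\mapsto a\triangleright b$ is a bijection of $G$ (conjugation by $a$), hence restricts to a bijection of any $\triangleright$-closed finite subset containing $a$, which forces closure under the operation dual to $\triangleright$. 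The first assertion of the lemma then follows at once, since a singleton $\{\overline g\}\subseteq G/N$ is a subrack (as $\overline g\triangleright\overline g=\overline g$), so its preimage $\pi^{-1}(\{\overline g\})=gN$ is a subrack of $G$.

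For the second assertion, write $R=\langle a_1,\dots,a_k\rangle_{\mathsf{rk}}$ and $\overline R=\pi(R)=\langle \overline{a_1},\dots,\overline{a_k}\rangle_{\mathsf{rk}}\subseteq G/N$, the equality holding because $\pi$ is a surjective rack homomorphism. Since $\overline g\in\overline R$ holds precisely when the coset $gN$ meets $R$, the set $\pi^{-1}(\overline R)$ is exactly the union of all cosets of $N$ containing an element of $R$, that is, the right-hand side of the claimed identity. The join $J$ of the cosets $a_1N,\dots,a_kN$ in $\mathcal R(G)$ is by definition the smallest subrack containing all of them, equivalently the subrack generated by the union $a_1N\cup\cdots\cup a_kN$. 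One inclusion is immediate: $\pi^{-1}(\overline R)$ is a subrack (being the preimage of the subrack $\overline R$) that contains each $a_iN=\pi^{-1}(\{\overline{a_i}\})$, so $J\subseteq\pi^{-1}(\overline R)$ by minimality of the join.

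The reverse inclusion carries the real content, and the main obstacle is to show that $J$ is \emph{$N$-saturated}, i.e. a union of full cosets of $N$. The key computation I would carry out is that for two full cosets $C=xN$ and $D=yN$ the set $C\triangleright D$ is again a single full coset: its image under $\pi$ is the single element $\overline x\,\overline y\,\overline x^{-1}$, while already the elements $x\triangleright(yn)=(xyx^{-1})(xnx^{-1})$ sweep out all of $xyx^{-1}N$ as $n$ ranges over $N$, using normality of $N$ to see that $n\mapsto xnx^{-1}$ permutes $N$. Consequently the family of $N$-saturated subsets of $G$ is closed under $\triangleright$, and since $a_1N\cup\cdots\cup a_kN$ is $N$-saturated, forming the generated subrack by iterated applications of $\triangleright$ (legitimate by the finiteness remark above) keeps every stage $N$-saturated, so $J$ itself is $N$-saturated. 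Pushing forward, $\pi(J)$ is a subrack of $G/N$ containing all $\overline{a_i}$, hence $\pi(J)\supseteq\overline R$, and saturation gives $J=\pi^{-1}(\pi(J))\supseteq\pi^{-1}(\overline R)$. Together with the previous inclusion this yields $J=\pi^{-1}(\overline R)$, which is precisely the asserted description of the join.
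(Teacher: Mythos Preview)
The paper does not give its own proof of this lemma; it is quoted from \cite[Lemma~3.2]{Kay21} and used as a black box. So there is nothing in the present paper to compare your argument against.

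That said, your argument is correct and self-contained. The two ingredients you isolate---(i) preimages of subracks under the rack homomorphism $\pi\colon G\to G/N$ are subracks, and (ii) for full cosets $xN,yN$ one has $(xN)\triangleright(yN)=(x\triangleright y)N$---are exactly what is needed. From (ii) you deduce that the $N$-saturated subsets of $G$ are closed under $\triangleright$, hence that the subrack generated by the union $a_1N\cup\cdots\cup a_kN$ is itself $N$-saturated; combined with (i) this pins the join between $\pi^{-1}(\overline R)$ and $\pi^{-1}(\pi(J))$ and forces equality. Your care in noting that, in a finite rack, closure under $\triangleright$ already gives a subrack (so that iterated $\triangleright$-closure computes the generated subrack) is appropriate here and matches how the paper treats subracks elsewhere. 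This is essentially the natural argument and is presumably close in spirit to the proof in \cite{Kay21}.
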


Let $N$ be a normal subgroup of $G$. By Lemma~\ref{lem:coset} the subrack lattice $\mathcal{R}(G/N)$ is isomorphic to a subposet of $\mathcal{R}(G)$. Let $H$ be a group whose subrack lattice is isomorphic to the subrack lattice of $G$. The following result will be used to construct an isomorphic copy of the poset $\mathcal{R}(G/N)\setminus \{\emptyset\}$ in the subrack lattice $\mathcal{R}(H)$.

\begin{lem}\label{lem:partition}
Let $\mathcal{P} = \{P_1,\dots,P_m\}$ and $\mathcal{Q} = \{Q_1,\dots,Q_m\}$ be some partitions of a set $S$ of size $mn$ into parts each having the same size $n$. Then there is a bijection $f\colon \mathcal{P}\to \mathcal{Q}$ such that $$ f(P_i)\cap P_i \neq \emptyset\; \text{ for each }\; 1\leq i\leq m. $$
\end{lem}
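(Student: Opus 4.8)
The plan is to recast this as a bipartite matching problem and invoke Hall's marriage theorem. First I would form the bipartite graph $\Gamma$ whose two vertex classes are $\mathcal{P}$ and $\mathcal{Q}$, placing an edge between $P_i$ and $Q_j$ exactly when $P_i\cap Q_j\neq\emptyset$. A perfect matching in $\Gamma$ is precisely a bijection $f\colon\mathcal{P}\to\mathcal{Q}$ with $f(P_i)\cap P_i\neq\emptyset$ for all $i$, so the lemma reduces to verifying Hall's condition in $\Gamma$.

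Next I would check Hall's condition directly. Fix $\mathcal{P}'\subseteq\mathcal{P}$ with $|\mathcal{P}'|=k$, and let $\mathcal{Q}'\subseteq\mathcal{Q}$ be the neighbourhood of $\mathcal{P}'$ in $\Gamma$. The key observation is that, since the members of $\mathcal{Q}$ partition $S$, every element of $U:=\bigcup_{P\in\mathcal{P}'}P$ lies in some member of $\mathcal{Q}$, and that member meets $\mathcal{P}'$ and hence belongs to $\mathcal{Q}'$; therefore $U\subseteq\bigcup_{Q\in\mathcal{Q}'}Q$. Counting with the uniform-size hypothesis gives $kn=|U|\le |\mathcal{Q}'|\cdot n$, so $|\mathcal{Q}'|\ge k=|\mathcal{P}'|$. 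Thus Hall's condition holds, a perfect matching $f$ exists, it is automatically a bijection because $|\mathcal{P}|=|\mathcal{Q}|=m$, and $f(P_i)\cap P_i\neq\emptyset$ by the definition of the edges of $\Gamma$.

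I do not expect a genuine obstacle here: the equal block sizes are exactly what force Hall's inequality, so once the matching framework is set up the proof is a one-line counting estimate. As an alternative I would mention encoding the data as the $0/1$ incidence matrix $M$ with $M_{ij}=1$ iff $P_i\cap Q_j\neq\emptyset$; the matrix with entries $|P_i\cap Q_j|/n$ is doubly stochastic and entrywise dominated by $M$, so Birkhoff--von Neumann produces a permutation matrix supported within $M$, i.e.\ the desired bijection. But the Hall's theorem route is the shortest and is the one I would write up.
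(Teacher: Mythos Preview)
Your proof is correct and takes a genuinely different route from the paper. The paper argues by induction on $m$: if some $P_{i_0}$ coincides with some $Q_{j_0}$, one removes this common block and applies the inductive hypothesis to the remaining $(m-1)$ blocks; if no block of $\mathcal{P}$ equals a block of $\mathcal{Q}$, one fixes a pair $P_{i_0},Q_{j_0}$ that intersect and performs an explicit element-swapping procedure on the $Q_j$'s to manufacture a modified partition $\mathcal{Q}'$ in which $Q'_{j_0}=P_{i_0}$, reduces to the first case, and then checks that the resulting bijection still works for the original $\mathcal{Q}$.

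Your Hall's theorem argument is shorter and more conceptual: the uniform block size is used in a single counting line, and the combinatorial work is delegated to the marriage theorem. The paper's inductive proof, by contrast, is entirely self-contained and does not appeal to any external matching result, at the cost of the somewhat ad hoc swapping construction in Case~II. Both arguments exploit the same underlying principle (equal block sizes prevent any subfamily of $\mathcal{P}$ from being ``trapped'' inside too few members of $\mathcal{Q}$), but yours makes this explicit via Hall's condition, while the paper's hides it inside the induction. Your Birkhoff--von~Neumann remark is also valid and gives a third, more algebraic, viewpoint.
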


\begin{proof}
  The proof is by induction on the number of parts. Obviously, the assertion holds when the partition consists of one element, i.e. when the size of $S$ is $n$. Suppose the assertion holds for sets of size $(m-1)n$. We want to show that the assertion holds when the size of $S$ is $mn$. There are two cases that we shall consider.

  \medskip
  \noindent
  \emph{Case I:} $P_{i_0}=Q_{j_0}$ for some $1\leq i_0,j_0\leq m$. Observe that the partitions $\mathcal{P}_0:= \mathcal{P}\setminus \{P_{i_0}\}$ and $\mathcal{Q}_0:= \mathcal{Q}\setminus \{Q_{j_0}\}$ are the partitions of a set of size $(m-1)n$ and by the inductive hypothesis there exists a function $g\colon \mathcal{P}_0\to \mathcal{Q}_0$ having the desired property. Hence, the function
  \begin{align*}
    &f(P_i) =
      \begin{cases}
        g(P_i) & \quad \text{if }\; i\neq i_0 \\
        Q_{j_0} & \quad \text{if }\; i= i_0
      \end{cases}
  \end{align*}
  satisfies the property $f(P_i)\cap P_i \neq \emptyset$ for all $1\leq i\leq m$.

  \medskip
  \noindent
  \emph{Case II:} $P_i\neq Q_j$ for all $1\leq i,j\leq m$. Fix two index values $i_0$ and $j_0$ such that $P_{i_0}\cap Q_{j_0}\neq \emptyset$. We construct a partition $\mathcal{Q}'=\{Q'_1,\dots,Q'_m\}$ in the following way: Initially $Q'_j=Q_j$ for all $1\leq j\leq m$. If $P_{i_0}\cap Q_j \neq \emptyset$ and $j\neq j_0$, we swap elements of $P_{i_0}\cap Q_j$ with some elements of $Q_{j_0}\setminus P_{i_0}$ in an arbitrary fashion to form $Q'_j$ and $Q'_{j_0}$. Iterating this process at the final step $P_{i_0}$ and $Q'_{j_0}$ would be the same sets. Then, by Case I, there exists a function $f'\colon \mathcal{P}\to \mathcal{Q}'$ satisfying the property $f'(P_i)\cap P_i \neq \emptyset$ for all $1\leq i\leq m$. Observe that the induced function $f\colon \mathcal{P}\to \mathcal{Q}$ would also have the same property as $f(P_{i_0})=Q_{j_0}$ and $P_i\cap Q_j\neq \emptyset$ whenever $P_i\cap Q'_j\neq \emptyset$ and $i\neq i_0$.
\end{proof}

\begin{rem}
  Consider the bipartite graph constructed in the following way: The vertex set is $\mathcal{P}\sqcup \mathcal{Q}$ and there is an edge between two vertices if the intersection of the corresponding sets is not empty. Obviously, the number of neighbors of a subset $\mathcal{S}$ of $\mathcal{P}$ is at least the cardinality of $\mathcal{S}$. Thus, by Hall's marriage theorem (see \cite[Theorem 2.1.2]{Die05}), Lemma~\ref{lem:partition} follows.
\end{rem}

A partition $\mathcal{C} = \{C_1,\dots,C_m\}$ of a finite group $G$ will be called an \emph{hypothetical coset partition} if it satisfies the following three conditions:
\begin{itemize}
\item[(C1)] Each element of $\mathcal{C}$ has the same number of elements.
\item[(C2)] The element $C_1\in\mathcal{C}$ is a union of some conjugacy classes of $G$ and $C_1$ contains a central element.
\item[(C3)] For a subset $I$ of $[m]$ the following implication holds: For every choice of representatives $c_i\in C_i$ for $i\in I$, the join of the elements $C_i$, $i\in I$, in $\mathcal{R}(G)$ is the union of elements of $\mathcal{C}$ containing $\langle \{c_i\colon i\in I\}\rangle_{\mathsf{rk}}$.
\end{itemize}
Notice that the combinatorial structure of the subrack lattice $\mathcal{R}(G)$ determines whether a given partition of $G$ is a hypothetical coset partition. If $\mathcal{C}$ is a hypothetical coset partition of $G$, we denote by $\mathsf{J}(\mathcal{C})$ the subposet of $\mathcal{R}(G)$ whose elements are the joins of subsets of elements $\mathcal{C}$.

\begin{proof}[Proof of Theorem~\ref{thm:main_derived}]
  Take an element $A\in \mathsf{A}(G)$ and let $N$ be the union of all conjugacy classes of $G$ that are contained in $A$. By Lemma~\ref{lem:normal} the subrack $N$ is a maximal normal abelian subgroup of $G$ and any maximal normal abelian subgroup of $G$ can be described in this way via the combinatorial structure of $\mathcal{R}(G)$. 

  \medskip
  \noindent
  \emph{Claim. If $N\trianglelefteq G$ is a maximal normal abelian subgroup, then $\mathsf{J}(\mathcal{P})$ is isomorphic to $\mathcal{R}(G/N)\setminus \{\emptyset\}$ for any hypothetical coset partition $\mathcal{P}$ of $G$ with first part $P_1 = N$.}

  \medskip
  By Lemma~\ref{lem:coset} there is a subposet of $\mathcal{R}(G)$ that is isomorphic to $\mathcal{R}(G/N)\setminus \{\emptyset\}$. Clearly, the atoms of this poset determine a partition of $G$ satisfying the conditions (C1), (C2), and (C3). Let $\mathcal{P}$ be a hypothetical coset partition of $G$ with $P_1 = N$. By Lemma~\ref{lem:partition}, there exists a set of representative elements for the partition $\mathcal{P}$ such that those elements are also representatives for the cosets of $N$. Since $\mathcal{P}$ is a hypothetical coset partition, condition (C3) shows that the join operation on the parts of $\mathcal{P}$ agrees with the join operation on the corresponding cosets of $N$, independently of the choice of representatives. Therefore the claim holds.

  \medskip
  Take a hypothetical coset partition $\mathcal{P}$ of $G$ with $P_1 = N$. By the claim above $\mathsf{J}(\mathcal{P}) \cong \mathcal{R}(G/N)\setminus \{\emptyset\}$. Let $\psi\colon \mathcal{R}(G)\to \mathcal{R}(H)$ be a lattice isomorphism and let $\mathcal{Q}$ be the collection of images of the elements in $\mathcal{P}$ under $\psi$. Since $\psi$ is a lattice isomorphism, $\mathcal{Q}$ is a partition of $H$ and satisfies the conditions (C1), (C2), and (C3), hence is a hypothetical coset partition. Furthermore, the element $Q_1 = \psi(N)$ of $\mathcal{Q}$ is a maximal normal abelian subgroup of $H$ as $\psi(N)$ is the union of all conjugacy classes of $H$ that are contained in some element in $\mathsf{A}(H)$ (see Lemma~\ref{lem:normal}). Therefore, by applying the above claim one more time we observe that the subposet $\mathsf{J}(\mathcal{Q})$ of $\mathcal{R}(H)$ is isomorphic to the poset $\mathcal{R}(H/\psi(N))\setminus \{\emptyset\}$. 
  
  Repeating the above process, we obtain a sequence of subposets of $\mathcal{R}(G)$ corresponding to a subnormal series of $G$ with maximal abelian quotients. The isomorphic copies of these subposets in $\mathcal{R}(H)$ yield a corresponding subnormal series for $H$ with maximal abelian factor groups. Since the same argument applies at every step, we conclude that $G$ is solvable if and only if $H$ is solvable.

  It remains to show that $G$ and $H$ have the same derived length. We argue by induction on the derived length. If the derived length of $G$ is $1$, then $G$ is abelian, and hence $\mathcal{R}(G)$ is a Boolean algebra. Since $\mathcal{R}(G)\cong \mathcal{R}(H)$, the group $H$ is also abelian, so its derived length is $1$.

  Assume now that the statement holds for all finite solvable groups of derived length at most $\ell-1$, and suppose that $G$ has derived length $\ell\geq 2$. Let $N$ be a maximal normal abelian subgroup of $G$. Clearly, the quotient $G/N$ is solvable of derived length at most $\ell-1$. By the construction above we have
  $$ \mathcal{R}(G/N)\setminus \{\emptyset\} \cong \mathcal{R}(H/\psi(N))\setminus \{\emptyset\}. $$
  By the induction hypothesis, the groups $G/N$ and $H/\psi(N)$ have the same derived length. Therefore, the quotient $H/\psi(N)$ has derived length at most $\ell-1$ which implies that the derived length of $H$ is at most $\ell$.

  By symmetry, the same arguments with the roles of $G$ and $H$ interchanged show that the derived length of $G$ is at most the derived length of $H$. Thus $G$ and $H$ have the same derived length, namely $\ell$.

\end{proof}


The following result is proved in \cite[Theorem~3.3]{Kay21}, and the proof of Theorem~\ref{thm:main_derived} given here is developed along the same ideas.

\begin{thm}\label{thm:nilpotence_class}
  If $G$ is a finite nilpotent group of nilpotence class $c$, and if $H$ is a group such that
  $$ \mathcal{R}(G)\cong \mathcal{R}(H), $$
  then $H$ is nilpotent of nilpotence class $c$.
\end{thm}

\begin{proof}
  The proof is analogous to that of Theorem~\ref{thm:main_derived}. The difference is that, instead of considering maximal normal abelian subgroups, one considers the center of the group, which can be determined from the combinatorial structure of the subrack lattice. Indeed, the claim in the proof of Theorem~\ref{thm:main_derived} remains valid when the subgroup $N=Z(G)$. Thus, if $\psi\colon \mathcal{R}(G)\to \mathcal{R}(H)$ is a lattice isomorphism, then it induces an isomorphism
  $$ \mathcal{R}(G/Z(G))\setminus \{\emptyset\}\cong \mathcal{R}(H/Z(H))\setminus \{\emptyset\}. $$
  Repeating this construction along the upper central series, we obtain corresponding central quotients for $G$ and $H$. Therefore the length of the upper central series, that is, the nilpotence class, is determined by the subrack lattice.
\end{proof}

\section{Metacyclic groups}\label{sec:meta}

A group $G$ is called \emph{metacyclic} if it has a cyclic normal subgroup $N$ such that the quotient $G/N$ is also cyclic. The following fact is standard.

\begin{lem}\label{lem:subgroup_quotient}
  Every subgroup and every quotient of a metacyclic group is metacyclic.
\end{lem}

\begin{proof}
  Let $G$ be a group having a cyclic normal subgroup $N$ such that the quotient $G/N$ is also cyclic. If $H$ is a subgroup of $G$, then $H\cap N$ would be a cyclic normal subgroup of $H$. Since $H/(H\cap N)\cong HN/N$ and $HN/N$ is a subgroup of the cyclic group $G/N$, the subgroup $H$ would be metacyclic. Let $K$ be a normal subgroup of $G$. Then $NK/K$ is a cyclic subgroup of $G/K$ with quotient $(G/K)/(NK/K)$ which is also cyclic as $(G/K)/(NK/K)\cong G/NK$ and $G/NK\cong (G/N)/(NK/N)$ is cyclic. Thus, $G/K$ is metacyclic.
\end{proof}

Finite metacyclic groups have been studied extensively in literature. For example, in \cite{Bas69} Basmaji studied their isomorphism problem and in \cite{Hem00} Hempel classified finite metacyclic groups. The subgroup structure of these groups is known to be highly organized. In \cite{Yan20} Yang parametrized all subgroups of a finite metacyclic group by triples of integers and obtained further structural consequences. In all these cited works the following classical theorem of Hölder is fundamental.

\begin{thm}[Hölder, see {\cite[Theorem~21]{Zas99}}]\label{thm:hölder}
  Let $G$ be a finite group of order $mn$, and suppose that $N = \langle a\rangle$ is a cyclic normal subgroup of order $m$ such that $G/N = \langle bN\rangle$ is cyclic of order $n$. Then $G$ admits a presentation
  $$ G = \langle a,b \mid a^m = 1,\ b^n = a^t,\ bab^{-1} = a^r\rangle, $$
  where $m,n,t,r$ satisfy
  \begin{enumerate}[(i)]
  \item $m,n > 0$,
  \item $r^n\equiv 1 \pmod{m}$,
  \item $t(r-1)\equiv 0 \pmod{m}$.
  \end{enumerate}
  Conversely, whenever these numerical conditions hold, the above presentation defines a finite metacyclic group of order $mn$. 
\end{thm}

Let $G$ be a finite metacyclic group. We say a pair of elements $(a,b)$ of $G$ is a \emph{Hölder pair} if $a$ generates a normal subgroup of $G$ and the quotient $G/\langle a\rangle$ is a cyclic group generated by $b\langle a\rangle$. By Theorem~\ref{thm:hölder}, the group $G$ admits a \emph{Hölder presentation}
$$ G = \langle a,b \mid a^m = 1,\ b^n = a^t,\ bab^{-1} = a^r\rangle, $$
satisfying the numerical conditions stated there. In such a case we may write $$ G = \langle a,b\mid m,n,t,r\rangle $$ for brevity. 

For an element $a$ of a group $G$, we denote by $K_G(a)$ the conjugacy class of $a$ in $G$. Recall that the inner automorphism $\phi_a$ is defined as the permutation of $G$ that takes $g$ to $aga^{-1}$ and the set of inner automorphisms of $G$, denoted $\mathrm{Inn}(G)$, form a normal subgroup of the full automorphism group $\mathrm{Aut}(G)$ of $G$. 

\begin{prop}\label{prop:properties}
  Let $G = \langle a,b\mid m',n',t',r'\rangle$ be a finite metacyclic group. The following statements hold.
  \begin{enumerate}[(i)]
  \item Every element of $G$ can be written uniquely in the form $a^ib^j$ (or uniquely in the form $b^ja^i$) for some $0\leq i < m'$ and $0\leq j < n'$.
  \item $C_G(a) = \langle a,b^n\rangle$ and $C_G(b) = \langle a^m,b\rangle$, where $a^m$ and $b^n$ are smallest powers of $a$ and $b$ that are lying in $Z(G)$. Moreover, $Z(G) = \langle a^m,b^n\rangle$.
  \item $K_G(a) = \{b^iab^{-i}\colon i\in \mathbb{Z}\}$, so the conjugacy class of $a$ is exactly the $\langle b\rangle$-orbit of $a$ under conjugation, and therefore $|K_G(a)| = [G:C_G(a)]$ divides $|b|$. Similarly, $K_G(b) = \{a^iba^{-i}\colon i\in \mathbb{Z}\}$ and $[G:C_G(b)]$ divides $|a|$.
  \item $|\phi_a| = m = |K_G(b)| = [G:C_G(b)]$ and $|\phi_b| = n = |K_G(a)| = [G:C_G(a)]$.
  \end{enumerate}  
\end{prop}

\begin{proof}
  \begin{enumerate}[(i)]
  \item This follows from the fact that $\langle a \rangle$ is a normal subgroup of $G$ and the elements of $\langle b\rangle$ form a set of representatives for $G/\langle a\rangle$.
  \item The element $a^ib^j$ centralizes $a$ if and only if $b^j$ centralizes $a$. Observe that if $b^{j_1}$ and $b^{j_2}$ both centralize $a$, then $b^{\mathrm{gcd(j_1,j_2)}}$ centralizes $a$ as well. Therefore, there exists a smallest positive integer $n$ such that $b^n$ centralizes $a$. Similarly, there is a smallest positive integer $m$ such that $a^m$ centralizes $b$. Since $b^n$ commutes both $a$ and $b$, and $a^m$ commutes with both $a$ and $b$, both are central.  Therefore, we see that $C_G(a) = \langle a,b^n\rangle$, $C_G(b) = \langle a^m,b\rangle$, and $Z(G) = \langle a^m,b^n\rangle$.
  \item Observe that $b^ja^ia(b^ja^i)^{-1} = b^jab^{-j}$ since $a^i$ commutes with $a$. Similarly we have $a^ib^jb(a^ib^j)^{-1} = a^iba^{-i}$ as $b^j$ commutes with $b$.
  \item Since $a$ commutes with every power of $a$, for every $i,j$, the $\langle\phi_a\rangle$-orbit of the element $a^ib^j$ has the same length as the $\langle\phi_a\rangle$-orbit of $b^j$. If a power of $a$ centralizes $b$, then it centralizes $b^j$ for every $j$, and therefore the orbit length of $b^j$ divides the orbit length of $b$. Thus every orbit length of $\phi_a$ divides the orbit length of $b$. Similarly, every orbit length of $\phi_b$ divides the orbit length of $a$ and the result follows.
  \end{enumerate}
\end{proof}

The representation of the elements of $G$ in Proposition~\ref{prop:properties}~(i) is the \emph{standard form} for the given Hölder presentation. Since for a group rack the rack operation is the group conjugation operation, it is more convenient for us to represent the elements of $G$ in a \emph{standard form modulo the center} as in Lemma~\ref{lem:rep}~(ii).

\begin{lem}\label{lem:rep}
Let $G = \langle a,b\mid m',n',t',r'\rangle$ with $Z(G) = \langle a^m,b^n\rangle$. Enumerate the elements of $Z(G)$ as $z_k$, $1\leq k \leq m'n'/mn$. Then the following statements hold.
\begin{enumerate}[(i)]
\item If $g=a^ib^j$ and $h=a^ub^v$ are given in the standard form as described in Proposition~\ref{prop:properties}~(i), then
  $$ g\triangleright h = ghg^{-1} = a^wb^v\quad \text{where } w\equiv i + u(r')^j - i(r')^v \pmod{m'}. $$ 
\item Every element of $G$ can be written uniquely in the form $a^ib^jz_k$ for some $0\leq i < m$, $0\leq j < n$, and $1\leq k \leq m'n'/mn$.
\item If $g=a^ib^jz_k$ and $h=a^ub^vz_l$ written in standard form modulo the center, then there exists a unique central element $z_{l'}\in Z(G)$ such that
  $$ g\triangleright h=ghg^{-1} = a^{\bar{w}}b^vz_{l'}\quad \text{where } \bar{w}\equiv i + ur^j - ir^v \pmod{m}. $$
  Equivalently, if $w$ is the unique integer with $0\leq w<m'$ satisfying $$ w\equiv i+u(r')^j-i(r')^v \pmod{m'}, $$ and if $w=\bar{w} + em$ with $0\leq \bar{w} < m$, then $z_{l'}=a^{em}z_l$.
\end{enumerate}
\end{lem}

\begin{proof}
  If $g=a^ib^j$ and $h=a^ub^v$, then using the relation $ba=a^{r'}b$ one obtains $$ ghg^{-1}=a^{\,i+u(r')^j-i(r')^v}b^v, $$ where the exponent is reduced modulo $m'$ and Part~(i) follows.
  
  By Proposition~\ref{prop:properties} an element of $G$ can be expressed uniquely as $a^{i'}b^{j'}$ for some $0\leq i' < m'$ and $0\leq j' < n'$. Then $a^{i'}b^{j'} = a^ib^jz_k$ where $i' = i + mu$, $j' = j + nv$, and $z_k = a^{mu}b^{nv}$. Since $i,j,u$, and $v$ are uniquely determined, Part~(ii) follows.

  Let $g=a^i b^j z_k$ and $h=a^u b^v z_l$. Since $z_k,z_l\in Z(G)$, we have $$ ghg^{-1}=a^ib^j(a^u b^v)b^{-j}a^{-i}z_l. $$ By Part~(i), this is equal to
  $$ a^w b^v z_l\quad \text{where } w\equiv i+u(r')^j-i(r')^v \pmod{m'}. $$
  Write uniquely $w = \bar{w} + em$, $0\leq \bar w < m$. Since $a^{em}\in Z(G)$, we obtain
  $$ a^w b^v z_l = a^{\bar w} b^v (a^{em}z_l). $$
  Thus $ghg^{-1} = a^{\bar{w}}b^vz_{l'}$ for the unique central element $z_{l'}=a^{em}z_l$. This proves Part~(iii).
\end{proof}

\begin{rem}
If $g\in C_G(x)$, then the action of $\phi_g$ on the conjugacy class $K_G(y)$ depends only on the image of $g$ in the cyclic quotient $C_G(x)/Z(G)$. In particular, the size of the $\langle \phi_g\rangle$-orbit of $y$ is equal to the order of $gZ(G)$ in $C_G(x)/Z(G)$.
\end{rem}

The \emph{Euler totient function} $\varphi(n)$ is defined to be the number of units in $(\mathbb{Z}/n\mathbb{Z})^{\times}$. Equivalently, $\varphi(n)$ is the number of integers in $\{1,\dots, n\}$ that are relatively prime to $n$. It is well known that
$$ \sum\limits_{d\mid n}\varphi(d) = n. $$

\begin{prop}\label{prop:conditions}
  Let $G$ be a finite metacyclic group and let $(x,y)$ be a Hölder pair for $G$. Then the following statements hold.
  \begin{enumerate}[(i)]
  \item The centralizer $C_G(x)$ is a maximal normal abelian subgroup of $G$, and
  $$ Z(G)=C_G(x)\cap C_G(y). $$
  \item Let $[C_G(x):Z(G)]=m$. Then for each positive divisor $d$ of $m$, there are exactly
  $$ \varphi(d)\,|Z(G)| $$
  elements $g\in C_G(x)$ such that the image of $g$ in $C_G(x)/Z(G)$ has order $d$.
  \item Let $g\in C_G(x)$. If the image of $g$ in $C_G(x)/Z(G)$ has order $d$, then
  $$ |\langle g,y\rangle_{\mathsf{rk}}\cap K_G(y)|=d. $$
  Consequently, for each positive divisor $d$ of $m$, there are exactly
  $$ \varphi(d)\,|Z(G)| $$
  elements $g\in C_G(x)$ such that
  $$ |\langle g,y\rangle_{\mathsf{rk}}\cap K_G(y)|=d. $$
  \item Let $g,h\in C_G(x)$. If the cyclic subgroup generated by $gZ(G)$ is contained in the cyclic subgroup generated by $hZ(G)$ in $C_G(x)/Z(G)$, then
  $$ \langle g,y\rangle_{\mathsf{rk}}\cap K_G(y)\subseteq \langle h,y\rangle_{\mathsf{rk}}\cap K_G(y). $$
  In particular, if $gZ(G)$ and $hZ(G)$ have the same order in $C_G(x)/Z(G)$, then
  $$ \langle g,y\rangle_{\mathsf{rk}}\cap K_G(y)= \langle h,y\rangle_{\mathsf{rk}}\cap K_G(y). $$
  \end{enumerate}
\end{prop}

\begin{proof}
  Since $(x,y)$ is a Hölder pair for $G$, Proposition~\ref{prop:properties} gives $C_G(x)=\langle x,y^n\rangle$,\; $C_G(y)=\langle x^m,y\rangle$ for suitable positive integers $m,n$, and $Z(G)=C_G(x)\cap C_G(y)$. In particular, $C_G(x)$ is abelian and normal, and it is maximal among normal abelian subgroups, since any subgroup of $G$ properly containing $C_G(x)$ contains an element outside $C_G(x)$ and therefore cannot be abelian. This proves~(i).

  Now $C_G(x)/Z(G)\cong \langle x\rangle/\langle x^m\rangle$ is cyclic of order $m$. Hence, for each positive divisor $d$ of $m$, the quotient $C_G(x)/Z(G)$ contains exactly $\varphi(d)$ elements of order $d$. Since each coset of $Z(G)$ contains exactly $|Z(G)|$ elements, it follows that there are exactly $\varphi(d)|Z(G)|$ elements $g\in C_G(x)$ whose image in $C_G(x)/Z(G)$ has order $d$. This proves~(ii).

  Let $g\in C_G(x)$, and let $d$ be the order of $gZ(G)$ in $C_G(x)/Z(G)$. Since $g\in C_G(x)$, we may write $g=x^iz$ for some integer $i$ and some $z\in Z(G)$. By Lemma~\ref{lem:rep}, conjugation by $g$ acts on elements written in standard form modulo the center by changing the $x$-exponent according to the image of $g$ in $C_G(x)/Z(G)$; the central factor may change, but the action on the conjugacy class $K_G(y)$ is determined entirely by the coset $gZ(G)$. Therefore the orbit of $y$ under $\langle \phi_g\rangle$ has size equal to the order of $gZ(G)$, namely $d$. Since $\langle g,y\rangle_{\mathsf{rk}}\cap K_G(y)$ is precisely this orbit, we obtain $|\langle g,y\rangle_{\mathsf{rk}}\cap K_G(y)|=d$. This proves~(iii).

  For~(iv), suppose that $\langle gZ(G)\rangle\leq \langle hZ(G)\rangle$ in $C_G(x)/Z(G)$. Then there exists an integer $u$ such that $gZ(G)=(hZ(G))^u$. Again by Lemma~\ref{lem:rep}, the action of $\langle \phi_g\rangle$ on $K_G(y)$ is induced by the action of the cyclic subgroup generated by $gZ(G)$ in $C_G(x)/Z(G)$, and similarly for $h$. Hence every element in the $\langle \phi_g\rangle$-orbit of $y$ also lies in the $\langle \phi_h\rangle$-orbit of $y$, that is, $\langle g,y\rangle_{\mathsf{rk}}\cap K_G(y)\subseteq \langle h,y\rangle_{\mathsf{rk}}\cap K_G(y)$. If $gZ(G)$ and $hZ(G)$ have the same order, then they generate the same subgroup of the cyclic group $C_G(x)/Z(G)$, and equality follows.
\end{proof}

\begin{rem}\label{rem:compatible}
  The inclusion relations described in Proposition~\ref{prop:conditions} are compatible with the choice of the second element outside $C_G(x)$. More precisely, let $g,h\in C_G(x)$ and let $w\in G\setminus C_G(x)$. Then the following statements hold.
  \begin{enumerate}[(i)]
  \item If
  $$ \langle g,y\rangle_{\mathsf{rk}}\cap K_G(y)\subseteq \langle h,y\rangle_{\mathsf{rk}}\cap K_G(y), $$
  then
  $$ \langle g,w\rangle_{\mathsf{rk}}\cap K_G(w)\subseteq \langle h,w\rangle_{\mathsf{rk}}\cap K_G(w). $$
  \item If
  $$ \langle g,y\rangle_{\mathsf{rk}}\cap K_G(y) = \langle h,y\rangle_{\mathsf{rk}}\cap K_G(y), $$
  then
  $$ \langle g,w\rangle_{\mathsf{rk}}\cap K_G(w) = \langle h,w\rangle_{\mathsf{rk}}\cap K_G(w). $$
  \end{enumerate}

  Indeed, writing elements of $G$ in standard form modulo the center, if $g=x^iz\in C_G(x)$ and $w=x^uy^vz'\notin C_G(x)$, then the action of $\langle g\rangle$ on the conjugacy class $K_G(w)$ is equivalent to the action of $\langle x^i\rangle$ on the conjugacy class $K_G(y^v)$. Hence the above inclusion and equality relations are determined by the subgroup generated by $gZ(G)$ in the cyclic quotient $C_G(x)/Z(G)$, and therefore the same relations hold with $y$ replaced by $w$.
\end{rem}

\begin{lem}\label{lem:group}
  Let $G$ be a finite group and $(x,y)$ be a pair of elements of $G$. The following conditions are determined by the subrack lattice $\mathcal{R}(G)$.
  \begin{itemize}
  \item[{\rm (H1)}] The centralizer $C_G(x)$ is a maximal normal abelian subgroup of $G$, and $Z(G)=C_G(x)\cap C_G(y)$.
  \item[{\rm (H2)}] Let $[C_G(x):Z(G)]=m$. Then for each positive divisor $d$ of $m$, there are exactly $\varphi(d)|Z(G)|$ elements $g\in C_G(x)$ such that $|\langle g,y\rangle_{\mathsf{rk}}\cap K_G(y)|=d$.
  \item[{\rm (H3)}] Let $g,h\in C_G(x)$. Then $|\langle g,y\rangle_{\mathsf{rk}}\cap K_G(y)|$ divides $|\langle h,y\rangle_{\mathsf{rk}}\cap K_G(y)|$ if and only if $\langle g,y\rangle_{\mathsf{rk}}\cap K_G(y)$ is a subset of $\langle h,y\rangle_{\mathsf{rk}}\cap K_G(y)$.
  \end{itemize}
  Suppose for the pair $(x,y)$ the conditions {\rm (H1)}, {\rm (H2)}, and {\rm (H3)} are satisfied. For a divisor $d$ of $m$, let
  $$ A_d := \{g\in C_G(x)\colon |\langle g,y\rangle_{\mathsf{rk}}\cap K_G(y)|\; \text{ divides }\; d\}. $$
  Then $A_d$ is a normal abelian subgroup of $G$ containing $Z(G)$. Moreover, the quotient $A_d/Z(G)$ is cyclic of order $d$.     
\end{lem}

\begin{rem}\mbox{}
  \begin{enumerate}
  \item If $G$ is a finite metacyclic group and $(x,y)$ is a Hölder pair for $G$, then by Proposition~\ref{prop:conditions} the conditions (H1), (H2), and (H3) are satisfied for $(x,y)$.
  \item Let $G$ be a finite group and $(x,y)$ be a pair of elements of $G$ satisfying (H1), (H2), and (H3). For $g\in C_G(x)$, set $\Omega(g):=\langle g,y\rangle_{\mathsf{rk}}\cap K_G(y)$. Then, by (H3), inclusion among the sets $\Omega(g)$ corresponds exactly to divisibility of their cardinalities. Moreover, by (H2), for each positive divisor $d$ of $m=[C_G(x):Z(G)]$, there exist elements $g\in C_G(x)$ such that $|\Omega(g)|=d$. It follows that the collection
    $$ \{\Omega(g)\colon g\in C_G(x)\}, $$
    partially ordered by inclusion, forms a poset isomorphic to the divisibility lattice $D_m$ via the map $ \Omega(g)\longmapsto |\Omega(g)|$. In turn, for each divisor $d\mid m$, the set
    $$ A_d=\{g\in C_G(x)\colon |\Omega(g)|\mid d\} $$
    defines a subgroup of $C_G(x)$ (by Lemma~\ref{lem:group}), and the family $\{A_d\colon d\mid m\}$ forms a poset under inclusion with minimal element $Z(G)$ and maximal element $C_G(x)$ that is also isomorphic to $D_m$.
  \end{enumerate}
\end{rem}

\begin{proof}
  Since the subrack lattice determines the center, conjugacy classes, centralizers, maximal normal abelian subgroups, and subracks generated by pairs of atoms, the conditions (H1), (H2), and (H3) are encoded in $\mathcal R(G)$. This proves the first assertion of the Lemma.

  For the second assertion, let $(x,y)$ satisfy (H1), (H2), and (H3), and let $d$ be a positive integer dividing $m=[C_G(x):Z(G)]$. Define
  $$ \Omega(g):=\langle g,y\rangle_{\mathsf{rk}}\cap K_G(y). $$
  Since $A_d\subseteq C_G(x)$ and $C_G(x)$ is abelian, $A_d$ is abelian once it is a subgroup. Let $g\in A_d$. Since $g$ and $g^{-1}$ generate the same cyclic subgroup modulo $Z(G)$, the sets $\langle g,y\rangle_{\mathsf{rk}}\cap K_G(y)$ and $\langle g^{-1},y\rangle_{\mathsf{rk}}\cap K_G(y)$ coincide. Hence $g^{-1}\in A_d$. Now let $g,h\in A_d$. Choose an element $c\in C_G(x)$ such that $|\Omega(c)|=d$. By (H3), we have $\Omega(g)\subseteq \Omega(c)$ and $\Omega(h)\subseteq \Omega(c)$. For an element $p\in C_G(x)$, let us denote by $M_p$ the subgroup generated by the $y$-conjugates of $p$. Then $\Omega(p)=\langle p,y\rangle_{\mathsf{rk}}\cap K_G(y)$ is precisely the orbit of $y$ under the action of $M_p$ on $K_G(y)$.

  \medskip
  \noindent
  \emph{Claim. If $p\in C_G(x)$ satisfies $\Omega(p)\subseteq \Omega(c)$, then $M_p$ preserves $\Omega(c)$ setwise.}

  \medskip
  Let $f\in \Omega(c)$. Then $f=qyq^{-1}$ for some $q\in M_c\leq C_G(x)$. Since $C_G(x)$ is abelian, for every generator $y^\ell p y^{-\ell}$ of $M_p$ we have $$ (y^\ell p y^{-\ell})\, f\, (y^\ell p y^{-\ell})^{-1} = q((y^\ell p y^{-\ell})\, y\, (y^\ell p y^{-\ell})^{-1})q^{-1}. $$
  Now
  $$ (y^\ell p y^{-\ell})\, y\, (y^\ell p y^{-\ell})^{-1}\in \Omega(y^\ell p y^{-\ell}) = y^\ell \Omega(p) y^{-\ell}\subseteq y^\ell \Omega(c) y^{-\ell}=\Omega(c), $$
  because $\Omega(c)$ is invariant under conjugation by $y$. Hence every generator of $M_p$ preserves $\Omega(c)$, and therefore $M_p$ preserves $\Omega(c)$. This proves the claim.

  \medskip
  Applying the claim with $p=g$ and $p=h$, we see that both $M_g$ and $M_h$ preserve $\Omega(c)$. On the other hand, for every integer $\ell$ we have
  $$ y^\ell (gh) y^{-\ell}=(y^\ell g y^{-\ell})(y^\ell h y^{-\ell}), $$
  so every $y$-conjugate of $gh$ lies in the subgroup generated by the $y$-conjugates of $g$ and $h$. Therefore $M_{gh}\leq \langle M_g,M_h\rangle$. Hence $M_{gh}$ also preserves $\Omega(c)$, and consequently $\Omega(gh)\subseteq \Omega(c)$. Thus $|\Omega(gh)|$ divides $d$, so $gh\in A_d$. Hence $A_d$ is a subgroup of $C_G(x)$.

  To prove normality, let $t\in G$ and $g\in A_d$. Then $\Omega(tgt^{-1}) = t\Omega(g)t^{-1}$, hence $|\Omega(tgt^{-1})|=|\Omega(g)|$. It follows that $tgt^{-1}\in A_d$, and thus $A_d\trianglelefteq G$.

  Finally, by (H2) and (H3), the sets $A_d/Z(G)$, $d\mid m$, form a lattice under inclusion isomorphic to the divisibility lattice $D_m$. In particular, there is a unique subgroup of each order dividing $m$, hence $A_m/Z(G)$ is cyclic of order $m$. Therefore $A_d/Z(G)$ is cyclic of order $d$.
\end{proof}

By Lemma~\ref{lem:subgroup_quotient}, if $G$ is a finite metacyclic group, then the inner automorphism group $\mathrm{Inn}(G)\cong G/Z(G)$ is also metacyclic. Lemma~\ref{lem:group} suggests that a relation of the first type in a Hölder presentation for $\mathrm{Inn}(G)$ can be determined from the combinatorial properties of $\mathcal R(G)$. To recover the order of the second generator, one may perform an analogous analysis inside $C_G(y)$ by studying the action of elements of $C_G(y)$ on the conjugacy class $K_G(x)$.

\begin{lem}\label{lem:group_y}
  Let $G$ be a finite group and $(x,y)$ be a pair of elements of $G$. The following conditions are determined by the subrack lattice $\mathcal{R}(G)$.
  \begin{itemize}
  \item[{\rm (H1)}] The centralizer $C_G(x)$ is a maximal normal abelian subgroup of $G$, and $Z(G)=C_G(x)\cap C_G(y)$.
  \item[{\rm (H2$'$)}] Let $[C_G(y):Z(G)]=n$. Then for each positive divisor $d$ of $n$, there are exactly $\varphi(d)|Z(G)|$ elements $w\in C_G(y)$ such that $|\langle x,w\rangle_{\mathsf{rk}}\cap K_G(x)|=d$.
  \item[{\rm (H3$'$)}] Let $w,w'\in C_G(y)$. Then $|\langle x,w\rangle_{\mathsf{rk}}\cap K_G(x)|$ divides $|\langle x,w'\rangle_{\mathsf{rk}}\cap K_G(x)|$ if and only if $\langle x,w\rangle_{\mathsf{rk}}\cap K_G(x)$ is a subset of $\langle x,w'\rangle_{\mathsf{rk}}\cap K_G(x)$.
  \end{itemize}
  Suppose for the pair $(x,y)$ the conditions {\rm (H1)}, {\rm (H2$'$)}, and {\rm (H3$'$)} are satisfied. For a divisor $d$ of $n$, let
  $$ B_d := \{w\in C_G(y)\colon |\langle x,w\rangle_{\mathsf{rk}}\cap K_G(x)|\; \text{ divides }\; d\}. $$
  Then $B_d$ is an abelian subgroup of $G$ containing $Z(G)$. Moreover, the quotient $B_d/Z(G)$ is cyclic of order $d$.     
\end{lem}

\begin{rem}
  If $G$ is a finite metacyclic group and $(x,y)$ is a Hölder pair for $G$, then the conditions (H1), (H2$'$), and (H3$'$) are satisfied for $(x,y)$. Indeed, the proof is obtained from Proposition~\ref{prop:conditions} by interchanging the roles of $x$ and $y$, while using the standard form modulo the center. More precisely, since
  $$ C_G(y)/Z(G)\cong \langle y\rangle/\langle y^n\rangle $$
  is cyclic of order $n=[C_G(y):Z(G)]$, the action of an element $w\in C_G(y)$ on the conjugacy class $K_G(x)$ depends only on the image of $w$ in $C_G(y)/Z(G)$. Hence, for each divisor $d$ of $n$, there are exactly $\varphi(d)|Z(G)|$ elements $w\in C_G(y)$ such that
  $$ |\langle x,w\rangle_{\mathsf{rk}}\cap K_G(x)|=d, $$
  and inclusion among these subsets is equivalent to divisibility of their cardinalities.
\end{rem}

\begin{proof}
  The first assertion is proved exactly as in Lemma~\ref{lem:group}: the center, conjugacy classes, centralizers, and subracks generated by pairs of atoms are determined by $\mathcal{R}(G)$, so the conditions (H1), (H2$'$), and (H3$'$) are encoded in the subrack lattice.

  For the second assertion, define
  $$ \Xi(w):=\langle x,w\rangle_{\mathsf{rk}}\cap K_G(x),\qquad w\in C_G(y). $$
  The proof is parallel to that of Lemma~\ref{lem:group}. In particular, if $d\mid n=[C_G(y):Z(G)]$ and $c\in C_G(y)$ satisfies $|\Xi(c)|=d$, then for any $q\in C_G(y)$ with $\Xi(q)\subseteq \Xi(c)$, the subgroup generated by the $x$-conjugates of $q$ preserves $\Xi(c)$ setwise. Since $C_G(y)$ is abelian, it follows exactly as before that
  $$ B_d=\{w\in C_G(y)\colon |\Xi(w)|\text{ divides }d\} $$
  is a subgroup of $C_G(y)$ containing $Z(G)$. Hence $B_d$ is abelian. Finally, by (H2$'$) and (H3$'$), the family $\{B_d\colon d\mid n\}$ is ordered by inclusion as the divisibility lattice $D_n$. Therefore, exactly as in Lemma~\ref{lem:group}, the quotient $B_d/Z(G)$ is cyclic of order $d$.
\end{proof}

The next proposition shows that a relation of the second type in a Hölder presentation for $\mathrm{Inn}(G)$ can also be determined from the combinatorial properties of $\mathcal R(G)$.

\begin{prop}\label{prop:second_relation}
  Let $G$ be a finite group and let $x,y\in G$ satisfy $Z(G)=C_G(x)\cap C_G(y)$. Then $\phi_x^i=\phi_y^j$ if and only if the order of $\phi_x$ divides $i$ and the order of $\phi_y$ divides $j$.
\end{prop}

\begin{proof}
  If the orders of $\phi_x$ and $\phi_y$ divide $i$ and $j$, respectively, then $\phi_x^i=1=\phi_y^j$. Conversely, suppose that $\phi_x^i=\phi_y^j$. Since $\phi_x^i=\phi_{x^i}$ and $\phi_y^j=\phi_{y^j}$, we have $\phi_{x^i}=\phi_{y^j}$. Evaluating both sides at $y$, we obtain $x^i y x^{-i}=y^j y y^{-j}=y$, so $x^i\in C_G(y)$. Similarly, evaluating both sides at $x$, we get $x^i x x^{-i}=y^j x y^{-j}$, hence $y^j\in C_G(x)$. Now $x^i\in C_G(y)\cap C_G(x)=Z(G)$, and similarly $y^j\in C_G(x)\cap C_G(y)=Z(G)$. Hence $\phi_x^i=\phi_{x^i}=1$ and $\phi_y^j=\phi_{y^j}=1$. Therefore the order of $\phi_x$ divides $i$ and the order of $\phi_y$ divides $j$.
\end{proof}

We have now recovered from $\mathcal{R}(G)$ the data needed to reconstruct the two cyclic factors and the conjugation relation in $\mathrm{Inn}(G)$. We now combine the previous lemmas and propositions to prove the second main result of this paper.

\begin{proof}[Proof of Theorem~\ref{thm:main_metacyclic}]
  Let $G = \langle a,b\mid m',n',t',r'\rangle$ and set $m:=|K_G(b)|=[G:C_G(b)]$ and $n:=|K_G(a)|=[G:C_G(a)]$ so that $Z(G) = \langle a^m,b^n\rangle$. By Proposition~\ref{prop:act}, we have $\mathrm{Inn}(G)\cong G/Z(G)$, and hence, by Lemma~\ref{lem:subgroup_quotient}, the group $\mathrm{Inn}(G)$ is metacyclic. Moreover, Proposition~\ref{prop:act} shows that $\mathrm{Inn}(G)$ is generated by $\phi_a$ and $\phi_b$, while Proposition~\ref{prop:properties}(iv) gives $|\phi_a|=m$ and $|\phi_b|=n$. Since $\langle a\rangle\trianglelefteq G$, the subgroup $\langle \phi_a\rangle$ is normal in $\mathrm{Inn}(G)$, and there exists an integer $r$ such that $\phi_b\phi_a\phi_b^{-1}=\phi_a^r$. Because $|\phi_b|=n$, we obtain $\phi_a=\phi_b^n\phi_a\phi_b^{-n}=\phi_a^{\,r^n}$, and therefore $r^n\equiv 1 \pmod{m}$. Furthermore, by Proposition~\ref{prop:second_relation}, if $\phi_b^i=\phi_a^j$, then $n\mid i$ and $m\mid j$. Hence the quotient $\mathrm{Inn}(G)/\langle \phi_a\rangle$ is cyclic of order $n$ generated by $\phi_b\langle \phi_a\rangle$. It follows that $(\phi_a,\phi_b)$ is a Hölder pair for $\mathrm{Inn}(G)$ and that
  $$ \mathrm{Inn}(G) = \langle \phi_a,\phi_b \mid \phi_a^m=1,\ \phi_b^n=1,\ \phi_b\phi_a\phi_b^{-1}=\phi_a^r\rangle, $$
  where $r^n\equiv 1\pmod{m}$.
  
  Let $\psi\colon \mathcal{R}(G)\to \mathcal{R}(H)$ be a lattice isomorphism and let the images of the atoms $\{a\}$ and $\{b\}$ under $\psi$ be $\{\alpha\}$ and $\{\beta\}$, respectively. Since $(a,b)$ is a Hölder pair for $G$, Proposition~\ref{prop:conditions} shows that the pair $(a,b)$ satisfies (H1), (H2), and (H3). Although these conditions are stated in group-theoretic terms, they are determined by the combinatorial structure of the subrack lattice. Hence, because $\psi\colon \mathcal{R}(G)\longrightarrow \mathcal{R}(H)$ is a lattice isomorphism sending the atoms $\{a\}$ and $\{b\}$ to the atoms $\{\alpha\}$ and $\{\beta\}$, respectively, the pair $(\alpha,\beta)$ satisfies the corresponding conditions in $H$. By Lemma~\ref{lem:group}, the quotient $C_H(\alpha)/Z(H)$ is cyclic of order $m=[C_G(a):Z(G)]=|K_G(b)|$. Therefore $|\phi_\alpha|=m$, and hence the relation  $\phi_\alpha^m=1$ holds in $\mathrm{Inn}(H)$.

  By the remark after Lemma~\ref{lem:group_y}, the pair $(a,b)$ also satisfies (H2$'$) and (H3$'$). Therefore the pair $(\alpha,\beta)$ satisfies (H2$'$) and (H3$'$) in $H$. By Lemma~\ref{lem:group_y}, the quotient $C_H(\beta)/Z(H)$ is cyclic of order $n=[C_G(b):Z(G)]=|K_G(a)|$. Therefore $|\phi_\beta|=n$, and hence the relation $\phi_\beta^n=1$ holds in $\mathrm{Inn}(H)$.

  Next, since $C_H(\alpha)$ is a normal subgroup of $H$, we have $\beta\alpha\beta^{-1}\in C_H(\alpha)$. Hence there exists an element $\gamma\in C_H(\alpha)$ such that $\phi_\beta\phi_\alpha\phi_\beta^{-1}=\phi_\gamma$. Since $C_H(\alpha)/Z(H)$ is cyclic of order $m$, there exists an integer $\bar{r}$ such that $\gamma Z(H)=\alpha^{\bar r}Z(H)$. Therefore $\phi_\gamma=\phi_\alpha^{\bar r}$, and so $\phi_\beta\phi_\alpha\phi_\beta^{-1}=\phi_\alpha^{\bar r}$. This gives the third type relation in $\mathrm{Inn}(H)$.

  Now, because $|\phi_\beta|=n$, we have $\phi_\beta^n=1$, and therefore $\phi_\beta^n\phi_\alpha\phi_\beta^{-n}=\phi_\alpha$. On the other hand, iterating the relation $\phi_\beta\phi_\alpha\phi_\beta^{-1}=\phi_\alpha^{\bar r}$ yields $\phi_\beta^n\phi_\alpha\phi_\beta^{-n}=\phi_\alpha^{\bar r^{\,n}}$. Thus $\phi_\alpha^{\bar r^{\,n}}=\phi_\alpha$. Since $|\phi_\alpha|=m$, it follows that $\bar r^{\,n}\equiv 1 \pmod{m}$. Hence the numerical conditions of Hölder's theorem are satisfied for the presentation $\langle\phi_\alpha,\phi_\beta\mid m,n,0,\bar{r}\rangle$, and therefore $\langle \phi_\alpha,\phi_\beta\rangle$ is a split metacyclic group of order $mn$.

  On the other hand, because $\psi$ is a lattice isomorphism, the groups $G$ and $H$ have the same number of elements. Moreover, the center is determined by the subrack lattice, so $|Z(G)|=|Z(H)|$. Therefore
  $$ |\mathrm{Inn}(H)|=[H:Z(H)]=[G:Z(G)]=|\mathrm{Inn}(G)|. $$
  Since $\mathrm{Inn}(G)$ is a split metacyclic group of order $mn$, we obtain $|\mathrm{Inn}(H)|=mn$. But $\langle \phi_\alpha,\phi_\beta\rangle$ is a subgroup of $\mathrm{Inn}(H)$ of order $mn$. Consequently, $\mathrm{Inn}(H)=\langle \phi_\alpha,\phi_\beta\rangle$. In particular, $\mathrm{Inn}(H)$ is metacyclic, and it admits a presentation
  $$ \mathrm{Inn}(H) = \langle \phi_\alpha,\phi_\beta \mid \phi_\alpha^m=1,\ \phi_\beta^n=1,\ \phi_\beta\phi_\alpha\phi_\beta^{-1}=\phi_\alpha^{\bar r}\rangle, $$
  where $\bar r^{\,n}\equiv 1 \pmod m$. This completes the proof.

\end{proof}

\section{Isomorphism problem for subrack lattices}\label{sec:iso}

In this section we prove Proposition~\ref{prop:ex}. We begin with a general observation showing that, for finite racks, the full subrack lattice is determined by the subracks generated by pairs of elements.

\begin{prop}\label{prop:pairs}
  Let $X$ and $Y$ be two finite racks, and let $f\colon X\to Y$ be a bijection satisfying
  $$ f(\langle x,x'\rangle_{\mathsf{rk}}) = \langle f(x),f(x')\rangle_{\mathsf{rk}} $$
  for all $x,x'\in X$. Then $f$ induces a lattice isomorphism
  $$ f\colon\mathcal{R}(X)\to \mathcal{R}(Y). $$
\end{prop}

\begin{proof}
  Let $A\subseteq X$. We define a sequence of subsets of $X$ by
  $$ A_0:=A,\qquad A_{n+1}:=A_n\cup\bigcup_{x,x'\in A_n}\langle x,x'\rangle_{\mathsf{rk}} \quad (n\geq 0). $$
  Since $X$ is finite, this ascending sequence stabilizes at some subset $A_\infty$. We claim that $A_\infty=\langle A\rangle_{\mathsf{rk}}$. Indeed, each $A_n$ is contained in every subrack containing $A$, because if a subrack contains $A_n$, then it also contains $\langle x,x'\rangle_{\mathsf{rk}}$ for all $x,x'\in A_n$, and hence contains $A_{n+1}$. Therefore $A_\infty$ is contained in every subrack containing $A$. On the other hand, if $x,x'\in A_\infty$, then $x,x'\in A_n$ for some $n$, so $\langle x,x'\rangle_{\mathsf{rk}}\subseteq A_{n+1}\subseteq A_\infty$. Thus $A_\infty$ is itself a subrack containing $A$. Hence $A_\infty=\langle A\rangle_{\mathsf{rk}}$.

  Now let $B:=f(A)\subseteq Y$, and define $B_0,B_1,\dots$ similarly by
  $$ B_0:=B,\qquad B_{n+1}:=B_n\cup\bigcup_{y,y'\in B_n}\langle y,y'\rangle_{\mathsf{rk}}. $$
  We prove by induction on $n$ that $B_n=f(A_n)$ for all $n\geq 0$. This is clear for $n=0$. Assume that $B_n=f(A_n)$. Then
  \begin{align*}
    B_{n+1}
      &= B_n\cup\bigcup_{y,y'\in B_n}\langle y,y'\rangle_{\mathsf{rk}} \\
      &= f(A_n)\cup\bigcup_{x,x'\in A_n}\langle f(x),f(x')\rangle_{\mathsf{rk}} \\
      &= f(A_n)\cup\bigcup_{x,x'\in A_n} f(\langle x,x'\rangle_{\mathsf{rk}}) \\
      &= f\!\left(A_n\cup\bigcup_{x,x'\in A_n}\langle x,x'\rangle_{\mathsf{rk}}\right) \\
      &= f(A_{n+1}),
  \end{align*}
  proving the induction step. Passing to the stable values of the two sequences, we obtain
  $$ f(\langle A\rangle_{\mathsf{rk}})=f(A_\infty)=B_\infty=\langle f(A)\rangle_{\mathsf{rk}}. $$
  In particular, if $S$ is a subrack of $X$, then $f(S)=\langle f(S)\rangle_{\mathsf{rk}}$ is a subrack of $Y$. Thus $f$ induces a map
  $$ \mathcal{R}(X)\to\mathcal{R}(Y),\qquad S\mapsto f(S). $$
  Applying the same argument to $f^{-1}$, we see that this map is bijective. Since $f$ preserves inclusion of subsets, it is an isomorphism of posets, hence a lattice isomorphism.
\end{proof}

Recall that the groups $G_0$ and $H_0$ are defined by
$$ G_0 = \langle a,b\mid 91,3,0,9\rangle\qquad \text{and}\qquad H_0 = \langle \alpha,\beta\mid 91,3,0,16\rangle. $$

\begin{lem}\label{lem:g0h0}
  The groups $G_0$ and $H_0$ are not isomorphic.
\end{lem}

\begin{proof}
  Suppose $f\colon G_0\to H_0$ is a group isomorphism. Since both $G_0$ and $H_0$ are semidirect products $C_{91}\rtimes C_3$, the subgroups $\langle a \rangle$ and $\langle \alpha \rangle$ are the unique normal subgroups of order $91$. Hence $f(\langle a \rangle)=\langle \alpha \rangle$ implying $f(a) = \alpha^u$ for some integer $u$ that is relatively prime to $91$. Since $b\notin \langle a \rangle$, its image does not lie in $\langle \alpha\rangle$, so we may write $f(b) = \alpha^i\beta^j$ for some integer $i$ and some $j\in \{1,2\}$. Now $f(bab^{-1}) = f(a^9) = \alpha^{9u}$. On the other hand,
  $$ f(b)f(a)f(b)^{-1} = (\alpha^i\beta^j)\alpha^u(\alpha^i\beta^j)^{-1} = \alpha^{16^ju}. $$
  Therefore $\alpha^{9u}=\alpha^{16^ju}$. Since $\mathrm{gcd}(u,91)=1$, it follows that $9\equiv 16^j\pmod{91}$. However, this is not the case as $16^2\equiv 74\not\equiv 9\pmod{91}$. This is a contradiction. Hence $G_0$ and $H_0$ are not isomorphic.
\end{proof}

In proving Proposition~\ref{prop:ex} we shall show that following construction yields a lattice isomorphism from $\mathcal{R}(G_0)$ to $\mathcal{R}(H_0)$.

\begin{construction}
  We define a bijection $\psi_0\colon G_0\to H_0$ as follows. First, consider the normal cyclic subgroups $\langle a\rangle\leq G_0$ and $\langle \alpha\rangle\leq H_0$. For each divisor $d\in\{1,7,13,91\}$, let
  $$ A_d:=\{a^i\in \langle a\rangle : |a^i|=d\}, \qquad A_d':=\{\alpha^i\in \langle \alpha\rangle : |\alpha^i|=d\}. $$
  Equivalently,
  \begin{align*}
    A_1 &= \{a^0\},\\
    A_7 &= \{a^i : \gcd(i,91)=13\},\\
    A_{13} &= \{a^i : \gcd(i,91)=7\},\\
    A_{91} &= \{a^i : \gcd(i,91)=1\},
  \end{align*}
  and similarly for $A_1',A_7',A_{13}',A_{91}'$.

  For each $d\in\{1,7,13,91\}$, the conjugation action of $b$ on $A_d$ decomposes $A_d$ into conjugacy classes of the form
  $$ \{a^i,a^{9i},a^{81i}\}, $$
  while the conjugation action of $\beta$ on $A_d'$ decomposes $A_d'$ into conjugacy classes of the form
  $$ \{\alpha^i,\alpha^{16i},\alpha^{74i}\}. $$
  We order the conjugacy classes in $A_d$ and $A_d'$ by increasing least exponent, and within each class we order the elements by increasing exponent. We then define $\psi_0$ on $\langle a\rangle$ by sending the $k$th element of the $j$th conjugacy class in $A_d$ to the $k$th element of the $j$th conjugacy class in $A_d'$.

  Next, for the coset $\langle a\rangle b$, we define
  $$ \psi_0(a^i b)=\alpha^i\beta \qquad (0\leq i<91). $$

  Finally, for the coset $\langle a\rangle b^2$, we define $\psi_0$ so that the centralizers of the elements $a^ib$ are preserved. Since
  $$ (a^i b)^2=a^{10i}b^2 \qquad\text{and}\qquad (\alpha^i\beta)^2=\alpha^{17i}\beta^2, $$
  the centralizers of $a^i b$ and $\alpha^i\beta$ contain the elements $a^{10i}b^2$ and $\alpha^{17i}\beta^2$, respectively. Thus we require
  $$ \psi_0(a^{10i}b^2)=\alpha^{17i}\beta^2. $$
  Equivalently, if
  $$ \psi_0(a^k b^2)=\alpha^{ck}\beta^2, $$
  then $c$ must satisfy
  $$ 10c\equiv 17\pmod{91}. $$
  Since $c\equiv 29\pmod{91}$ is the unique solution, we define
  $$ \psi_0(a^i b^2)=\alpha^{29i}\beta^2 \qquad (0\leq i<91). $$
\end{construction}

The following lemma is useful to determine the subracks of a group that are generated by a pair of its elements.

\begin{lem}\label{lem:pair}
  Let $G$ be a group and let $x,y\in G$. Set $L:=\langle x,y\rangle$. Then
  $$ \langle x,y\rangle_{\mathsf{rk}}=K_L(x)\cup K_L(y). $$
\end{lem}

\begin{proof}
  Since every subrack of $G$ containing $x$ and $y$ is closed under conjugation by elements generated from $x$ and $y$, it contains every conjugate of $x$ and every conjugate of $y$ by elements of $L$. Hence
  $$ K_L(x)\cup K_L(y)\subseteq \langle x,y\rangle_{\mathsf{rk}}. $$
  Conversely, every element obtained from $x$ and $y$ by repeated rack operations is obtained by conjugation by elements of $L$, and therefore is a conjugate in $L$ of either $x$ or $y$. Thus
  $$ \langle x,y\rangle_{\mathsf{rk}}\subseteq K_L(x)\cup K_L(y). $$
  Therefore
  $$ \langle x,y\rangle_{\mathsf{rk}}=K_L(x)\cup K_L(y). $$
\end{proof}

Now we are ready to prove our final main result. The following proof gives a direct verification that $\psi_0$ preserves the subracks generated by pairs of elements. A GAP \cite{GAP4} implementation of this verification is included in the appendix.

\begin{proof}[Proof of Proposition~\ref{prop:ex}]
  First we show that $G_0$ and $H_0$ are not isomorphic as racks. Since $Z(G_0)=Z(H_0)=1$, any isomorphism between the group racks $G_0$ and $H_0$ would induce an isomorphism of groups $G_0\to H_0$. By Lemma~\ref{lem:g0h0}, the groups $G_0$ and $H_0$ are not isomorphic. Therefore $G_0\not\cong_{\mathsf{rk}} H_0$.

  Next we show that the map $\psi_0\colon G_0\to H_0$ induces a lattice isomorphism. We verify the equality
  $$ \psi_0(\langle x,y\rangle_{\mathsf{rk}}) = \langle \psi_0(x),\psi_0(y)\rangle_{\mathsf{rk}} $$
  for every pair $x,y\in G_0$. Since every element of $G_0$ lies in exactly one of the three cosets $N=\langle a\rangle$, $Nb$, and $Nb^2$, every pair falls into exactly one of the six unordered coset-types
  $$ (N,N),\ (N,Nb),\ (N,Nb^2),\ (Nb,Nb),\ (Nb,Nb^2),\ (Nb^2,Nb^2). $$
  In each nonabelian case the argument follows the same pattern: we set $L:=\langle x,y\rangle$, determine $L\cap N=\langle a^d\rangle$ for a suitable divisor $d$ of $91$, observe that $(a^d,x)$ or $(a^d,y)$ is a Hölder pair for $L$, describe the relevant conjugacy classes as $\langle a^d\rangle$-orbits, and then apply Lemma~\ref{lem:pair}. We then repeat the same analysis for $L':=\langle \psi_0(x),\psi_0(y)\rangle$, using that $\psi_0$ preserves the order classes $A_1,A_7,A_{13},A_{91}$ inside $N$ and maps each conjugacy class in $N$ onto the corresponding conjugacy class in $\langle \alpha\rangle$.

  \medskip
  \noindent
  \emph{Case~1.\; $\psi_0(\langle a^{i_1},a^{i_2}\rangle_{\mathsf{rk}}) = \langle \psi_0(a^{i_1}),\psi_0(a^{i_2})\rangle_{\mathsf{rk}}$}

  \medskip
  Let $x=a^{i_1}$ and $y=a^{i_2}$. Since $N$ and $\psi_0(N)=\langle\alpha\rangle$ are abelian, both $\langle x,y\rangle_{\mathsf{rk}}$ and $\langle \psi_0(x),\psi_0(y)\rangle_{\mathsf{rk}}$ are just the subracks generated by two commuting elements. Hence the desired equality follows.

  \medskip
  \noindent
  \emph{Case~2.\; $\psi_0(\langle a^{i_1},a^{i_2}b\rangle_{\mathsf{rk}})=\langle \psi_0(a^{i_1}),\psi_0(a^{i_2}b)\rangle_{\mathsf{rk}}$}
  
  \medskip
  If $i_1=0$, then $a^{i_1}=1$ is central, so $\langle a^{i_1},a^{i_2}b\rangle_{\mathsf{rk}}=\{1,a^{i_2}b\}$. Since $\psi_0(1)=1$ and $\psi_0(a^{i_2}b)=\alpha^{i_2}\beta$, we also have $\langle \psi_0(a^{i_1}),\psi_0(a^{i_2}b)\rangle_{\mathsf{rk}}=\{1,\alpha^{i_2}\beta\}$, and the desired equality follows.
  
  Assume now that $i_1\neq 0$. Set
  $$ x:=a^{i_1},\qquad y:=a^{i_2}b,\qquad L:=\langle x,y\rangle. $$
  Let $d:=\gcd(91,i_1)$ and $m:=91/d$. Then $|x|=m$, and
  $$ yxy^{-1} = (a^{i_2}b)a^{i_1}(a^{i_2}b)^{-1} = ba^{i_1}b^{-1} = a^{9i_1} = x^9. $$
  Also, $(a^{i_2}b)^3=1$. Hence $(x,y)$ is a Hölder pair for $L$ with $L=\langle x,y\mid m,3,0,9\rangle$. Since $(x,y)$ is a Hölder pair for $L$, the conjugacy class of $x$ in $L$ is the
  $\langle y\rangle$-orbit of $x$, namely
  $$ K_L(x)=\{x,x^9,x^{81}\}=\{a^{i_1},a^{9i_1},a^{81i_1}\}. $$
  Similarly, the conjugacy class of $y$ in $L$ is the $\langle x\rangle$-orbit of $y$. Since $x^n y x^{-n}=a^{i_2-8n i_1}b$ and $-8$ is a unit modulo $91$, we obtain
  $$ K_L(y)=\{a^j b : j\in i_2+\langle i_1\rangle\}, $$
  where $\langle i_1\rangle$ denotes the additive subgroup of $\mathbb{Z}/91\mathbb{Z}$ generated by $i_1$. Hence, by Lemma~\ref{lem:pair},
  $$ \langle a^{i_1},a^{i_2}b\rangle_{\mathsf{rk}} = K_L(x)\cup K_L(y) = K_{G_0}(a^{i_1})\cup\{a^j b : j\in i_2+\langle i_1\rangle\}. $$
  
  Now set
  $$ x':=\psi_0(a^{i_1}),\qquad y':=\psi_0(a^{i_2}b)=\alpha^{i_2}\beta,\qquad L':=\langle x',y'\rangle. $$
  Since $\psi_0$ preserves the order of elements of $\langle a\rangle$, we have $|x'|=|x|=m$. Moreover,
  $$ y'x'(y')^{-1}=\beta x'\beta^{-1}=(x')^{16}, $$
  and $(y')^3=1$. Thus $(x',y')$ is a Hölder pair for $L'$ with $L'=\langle x',y'\mid m,3,0,16\rangle$. Since $\psi_0$ preserves the order of $a^{i_1}$ and hence the additive subgroup $\langle i_1\rangle\leq \mathbb{Z}/91\mathbb{Z}$, we have
  $$ K_{L'}(x')=K_{H_0}(x')\quad \text{and}\quad K_{L'}(y')=\{\alpha^j\beta : j\in i_2+\langle i_1\rangle\}. $$
  Hence, by Lemma~\ref{lem:pair},
  $$ \langle x',y'\rangle_{\mathsf{rk}} = K_{L'}(x')\cup K_{L'}(y') = K_{H_0}(\psi_0(a^{i_1}))\cup \{\alpha^j\beta : j\in i_2+\langle i_1\rangle\}. $$

  \medskip
  \noindent
  \emph{Case~3.\; $\psi_0(\langle a^{i_1},a^{i_2}b^2\rangle_{\mathsf{rk}})=\langle \psi_0(a^{i_1}),\psi_0(a^{i_2}b^2)\rangle_{\mathsf{rk}}$}
  
  \medskip
  If $i_1=0$, then $a^{i_1}=1$ is central, so $\langle a^{i_1},a^{i_2}b^2\rangle_{\mathsf{rk}}=\{1,a^{i_2}b^2\}$. Since $\psi_0(1)=1$ and $\psi_0(a^{i_2}b^2)=\alpha^{29i_2}\beta^2$, we also have $\langle \psi_0(a^{i_1}),\psi_0(a^{i_2}b^2)\rangle_{\mathsf{rk}}=\{1,\alpha^{29i_2}\beta^2\}$, and the desired equality follows.
  
  Assume now that $i_1\neq 0$. Set
  $$ x:=a^{i_1},\qquad y:=a^{i_2}b^2,\qquad L:=\langle x,y\rangle. $$
  Let $d:=\gcd(91,i_1)$ and $m:=91/d$. Then $|x|=m$, and
  $$ yxy^{-1} = (a^{i_2}b^2)a^{i_1}(a^{i_2}b^2)^{-1} = b^2a^{i_1}b^{-2} = a^{81i_1} = x^{81}. $$
  Also, $(a^{i_2}b^2)^3=1$. Hence $(x,y)$ is a Hölder pair for $L$ with $L=\langle x,y\mid m,3,0,81\rangle$ and the conjugacy class of $x$ in $L$ is the $\langle y\rangle$-orbit of $x$, namely
  $$ K_L(x)=\{x,x^{81},x^{81^2}\}=\{a^{i_1},a^{81i_1},a^{9i_1}\}=K_{G_0}(a^{i_1}). $$
  Similarly, the conjugacy class of $y$ in $L$ is the $\langle x\rangle$-orbit of $y$. Since
  $$ x^n y x^{-n} = a^{ni_1}(a^{i_2}b^2)a^{-ni_1} = a^{\,i_2-80ni_1}b^2, $$
  and $-80$ is a unit modulo $91$, we obtain
  $$ K_L(y)=\{a^j b^2 : j\in i_2+\langle i_1\rangle\}, $$
  where $\langle i_1\rangle$ denotes the additive subgroup of $\mathbb{Z}/91\mathbb{Z}$ generated by $i_1$. Hence, by Lemma~\ref{lem:pair},
  $$ \langle a^{i_1},a^{i_2}b^2\rangle_{\mathsf{rk}} = K_L(x)\cup K_L(y) = K_{G_0}(a^{i_1})\cup \{a^j b^2 : j\in i_2+\langle i_1\rangle\}. $$
  
  Now set
  $$ x':=\psi_0(a^{i_1}),\qquad y':=\psi_0(a^{i_2}b^2)=\alpha^{29i_2}\beta^2,\qquad L':=\langle x',y'\rangle. $$
  Since $\psi_0$ preserves the order of elements of $\langle a\rangle$, we have $|x'|=|x|=m$. Also,
  $$ y'x'(y')^{-1} = (\alpha^{29i_2}\beta^2)x'(\alpha^{29i_2}\beta^2)^{-1} = \beta^2x'\beta^{-2} = (x')^{74}, $$
  because if $x'=\alpha^u$, then $\beta^2\alpha^u\beta^{-2}=\alpha^{16^2u}=\alpha^{74u}$. Moreover, $(y')^3=1$. Hence $(x',y')$ is a Hölder pair for $L'$ with $L' = \langle x',y'\mid m,3,0,74\rangle$. Therefore the conjugacy class of $x'$ in $L'$ is the $\langle y'\rangle$-orbit of $x'$,
  that is,
  $$ K_{L'}(x')=K_{H_0}(x'). $$
  Similarly, the conjugacy class of $y'$ in $L'$ is the $\langle x'\rangle$-orbit of $y'$. If
  $x'=\alpha^u$, then
  $$ (x')^n y' (x')^{-n} = \alpha^{nu}(\alpha^{29i_2}\beta^2)\alpha^{-nu} = \alpha^{\,29i_2-73nu}\beta^2. $$
  Since $\psi_0$ preserves the gcd-layer of $i_1$, the additive subgroups generated by $u$, $i_1$, and $29i_1$ in $\mathbb{Z}/91\mathbb{Z}$ all coincide. As $-73$ is a unit modulo
  $91$, it follows that
  $$ K_{L'}(y')=\{\alpha^k\beta^2 : k\in 29i_2+\langle 29i_1\rangle\}. $$
  On the other hand, $\psi_0(a^j b^2)=\alpha^{29j}\beta^2$, so
  \begin{align*}
    \psi_0(\{a^j b^2:j\in i_2+\langle i_1\rangle\})
    &= \{\alpha^{29j}\beta^2:j\in i_2+\langle i_1\rangle\} \\
    &= \{\alpha^k\beta^2:k\in 29i_2+\langle 29i_1\rangle\}.
  \end{align*}
  Thus
  $$ \psi_0(K_L(y))=K_{L'}(y') \qquad\text{and}\qquad \psi_0(K_L(x))=K_{L'}(x'). $$
  Hence, by Lemma~\ref{lem:pair},
  $$ \psi_0(\langle a^{i_1},a^{i_2}b^2\rangle_{\mathsf{rk}}) = K_{L'}(x')\cup K_{L'}(y') = \langle \psi_0(a^{i_1}),\psi_0(a^{i_2}b^2)\rangle_{\mathsf{rk}}. $$

  \medskip
  \noindent
  \emph{Case~4.\; $\psi_0(\langle a^{i_1}b,a^{i_2}b\rangle_{\mathsf{rk}}) = \langle \psi_0(a^{i_1}b),\psi_0(a^{i_2}b)\rangle_{\mathsf{rk}}$}
  
  \medskip
  Set
  $$ x:=a^{i_1}b,\qquad y:=a^{i_2}b,\qquad L:=\langle x,y\rangle. $$
  Let $d:=\gcd(91,i_1,i_2)$. Then $L\cap \langle a\rangle = \langle a^d\rangle$ and $(a^d,x)$ is a Hölder pair for $L$ with $L/\langle a^d\rangle$ is cyclic of order $3$. Therefore the conjugacy class of $x$ in $L$ is the $\langle a^d\rangle$-orbit of $x$. Since
  $$ a^{nd}xa^{-nd}=a^{nd}(a^{i_1}b)a^{-nd}=a^{\,i_1-8nd}b, $$
  and $-8$ is a unit modulo $91$, we obtain
  $$ K_L(x)=\{a^{i_1+t}b : t\in \langle d\rangle\}, $$
  where $\langle d\rangle$ denotes the additive subgroup of $\mathbb{Z}/91\mathbb{Z}$ generated by $d$. Since $x$ and $y$ are conjugate in $L$, by Lemma~\ref{lem:pair} we have
  $$ \langle a^{i_1}b,a^{i_2}b\rangle_{\mathsf{rk}} = K_L(x)\cup K_L(y) = \{a^{i_1+t}b : t\in \langle d\rangle\}. $$
  
  By construction,
  $$ \psi_0(a^jb)=\alpha^j\beta \qquad (0\leq j<91), $$
  so
  $$ \psi_0(\{a^{i_1+t}b : t\in \langle d\rangle\}) = \{\alpha^{i_1+t}\beta : t\in \langle d\rangle\}. $$
  Now set
  $$ x':=\alpha^{i_1}\beta,\qquad y':=\alpha^{i_2}\beta,\qquad L':=\langle x',y'\rangle. $$
  As above $L'\cap \langle \alpha\rangle=\langle \alpha^d\rangle$ and $(\alpha^d,x')$ is a Hölder pair for $L'$ with $L'/\langle\alpha^d \rangle$ is cyclic of order $3$. Therefore the conjugacy class of $x'$ in $L'$ is the $\langle \alpha^d\rangle$-orbit of $x'$. Since
  $$ \alpha^{nd}x'\alpha^{-nd} = \alpha^{nd}(\alpha^{i_1}\beta)\alpha^{-nd} = \alpha^{\,i_1-15nd}\beta, $$
  and $-15$ is a unit modulo $91$, we obtain
  $$ K_{L'}(x')=\{\alpha^{i_1+t}\beta : t\in \langle d\rangle\}. $$
  Since $x'$ and $y'$ are conjugate in $L'$, by Lemma~\ref{lem:pair} we have 
  $$ \langle x',y'\rangle_{\mathsf{rk}} = K_{L'}(x')\cup K_{L'}(y') = \{\alpha^{i_1+t}\beta : t\in \langle d\rangle\}. $$
  Therefore
  $$ \psi_0(\langle a^{i_1}b,a^{i_2}b\rangle_{\mathsf{rk}}) = \langle \alpha^{i_1}\beta,\alpha^{i_2}\beta\rangle_{\mathsf{rk}} = \langle \psi_0(a^{i_1}b),\psi_0(a^{i_2}b)\rangle_{\mathsf{rk}}. $$

  \medskip
  \noindent
  \emph{Case~5.\; $\psi_0(\langle a^{i_1}b,a^{i_2}b^2\rangle_{\mathsf{rk}}) = \langle \psi_0(a^{i_1}b),\psi_0(a^{i_2}b^2)\rangle_{\mathsf{rk}}$}
  
  \medskip
  Set
  $$ x:=a^{i_1}b,\qquad y:=a^{i_2}b^2,\qquad L:=\langle x,y\rangle. $$
  Let $d:=\gcd(91,i_1,i_2)$. Then $L\cap \langle a\rangle = \langle a^d\rangle$ and $(a^d,x)$ is a Hölder pair for $L$ with $L/\langle a^d\rangle$ is cyclic of order $3$. Therefore the conjugacy class of $x$ in $L$ is the $\langle a^d\rangle$-orbit of $x$. Since
  $$ a^{nd}xa^{-nd}=a^{nd}(a^{i_1}b)a^{-nd}=a^{\,i_1-8nd}b, $$
  and $-8$ is a unit modulo $91$, we obtain
  $$ K_L(x)=\{a^{i_1+t}b : t\in \langle d\rangle\}, $$
  where $\langle d\rangle$ denotes the additive subgroup of $\mathbb{Z}/91\mathbb{Z}$
  generated by $d$. Similarly, the conjugacy class of $y$ in $L$ is the $\langle a^d\rangle$-orbit of $y$. Since
  $$ a^{nd}ya^{-nd}=a^{nd}(a^{i_2}b^2)a^{-nd}=a^{\,i_2-80nd}b^2, $$
  and $-80$ is a unit modulo $91$, we obtain
  $$ K_L(y)=\{a^{i_2+t}b^2 : t\in \langle d\rangle\}. $$
  Hence, by Lemma~\ref{lem:pair},
  $$ \langle a^{i_1}b,a^{i_2}b^2\rangle_{\mathsf{rk}} = K_L(x)\cup K_L(y) = \{a^{i_1+t}b : t\in \langle d\rangle\} \,\cup\, \{a^{i_2+t}b^2 : t\in \langle d\rangle\}. $$
  
  Now set
  $$ x':=\psi_0(x)=\alpha^{i_1}\beta,\qquad y':=\psi_0(y)=\alpha^{29i_2}\beta^2,\qquad L':=\langle x',y'\rangle. $$
  As above $L'\cap \langle \alpha\rangle=\langle \alpha^d\rangle$ and $(\alpha^d,x')$ is a Hölder pair for $L'$ with $L'/\langle\alpha^d \rangle$ is cyclic of order $3$. Since the conjugacy class of $x'$ in $L'$ is the $\langle \alpha^d\rangle$-orbit of $x'$, we obtain
  $$
  K_{L'}(x')=\{\alpha^{i_1+t}\beta : t\in \langle d\rangle\}.
  $$
  Similarly, the conjugacy class of $y'$ in $L'$ is the $\langle \alpha^d\rangle$-orbit of $y'$ and we have
  $$ K_{L'}(y')=\{\alpha^{29i_2+t}\beta^2 : t\in \langle d\rangle\}. $$
  
  By construction,
  $$ \psi_0(a^{i_1+t}b)=\alpha^{i_1+t}\beta \qquad\text{and}\qquad \psi_0(a^{i_2+t}b^2)=\alpha^{29(i_2+t)}\beta^2. $$
  Since multiplication by $29$ is a unit modulo $91$, the sets
  $$ \{\alpha^{29(i_2+t)}\beta^2:t\in \langle d\rangle\} \qquad\text{and}\qquad \{\alpha^{29i_2+t}\beta^2:t\in \langle d\rangle\} $$
  coincide. Hence
  $$ \psi_0(K_L(x))=K_{L'}(x') \qquad\text{and}\qquad \psi_0(K_L(y))=K_{L'}(y'). $$
  Therefore, again by Lemma~\ref{lem:pair},
  $$ \psi_0(\langle a^{i_1}b,a^{i_2}b^2\rangle_{\mathsf{rk}}) = \langle \alpha^{i_1}\beta,\alpha^{29i_2}\beta^2\rangle_{\mathsf{rk}} = \langle \psi_0(a^{i_1}b),\psi_0(a^{i_2}b^2)\rangle_{\mathsf{rk}}. $$

  \medskip
  \noindent
  \emph{Case~6.\; $\psi_0(\langle a^{i_1}b^2,a^{i_2}b^2\rangle_{\mathsf{rk}})
    =\langle \psi_0(a^{i_1}b^2),\psi_0(a^{i_2}b^2)\rangle_{\mathsf{rk}}$}
  
  \medskip
  Set
  $$ x:=a^{i_1}b^2,\qquad y:=a^{i_2}b^2,\qquad L:=\langle x,y\rangle. $$
  Let $d:=\gcd(91,i_1,i_2)$. Similar to previous cases $(a^d,x)$ is a Hölder pair for $L$ and the conjugacy class of $x$ in $L$ is the $\langle a^d\rangle$-orbit of $x$. Since
  $$ a^{nd}xa^{-nd}=a^{nd}(a^{i_1}b^2)a^{-nd}=a^{\,i_1-80nd}b^2, $$
  and $-80$ is a unit modulo $91$, we have
  $$ K_L(x)=\{a^{i_1+t}b^2:t\in \langle d\rangle\}. $$
  Since $x$ and $y$ are conjugate in $L$, by Lemma~\ref{lem:pair} we have
  $$ \langle a^{i_1}b^2,a^{i_2}b^2\rangle_{\mathsf{rk}} = K_L(x)\cup K_L(y) = \{a^{i_1+t}b^2 : t\in \langle d\rangle\}. $$
  
  As was observed before
  $$ \psi_0(\{a^{i_1+t}b^2:t\in \langle d\rangle\}) = \{\alpha^{29(i_1+t)}\beta^2:t\in \langle d\rangle\} = \{\alpha^{29i_1+t}\beta^2:t\in \langle d\rangle\}. $$
  Now set
  $$ x':=\alpha^{29i_1}\beta^2,\qquad y':=\alpha^{29i_2}\beta^2,\qquad L':=\langle x',y'\rangle. $$
  Similar to Case~4, $(\alpha^d,x')$ is a Hölder pair for $L'$ and the conjugacy class of $x'$ in $L'$ is the $\langle \alpha^d\rangle$-orbit of $x'$. Since
  $$ \alpha^{nd}x'\alpha^{-nd} = \alpha^{nd}(\alpha^{29i_1}\beta^2)\alpha^{-nd} = \alpha^{\,29i_1-73nd}\beta^2, $$
  and $-73$ is a unit modulo $91$, we obtain
  $$ K_{L'}(x')=\{\alpha^{29i_1+t}\beta^2:t\in \langle d\rangle\}. $$
  Since $x'$ and $y'$ are conjugate in $L'$, by Lemma~\ref{lem:pair} we have 
  $$ \langle x',y'\rangle_{\mathsf{rk}} = K_{L'}(x')\cup K_{L'}(y') = \{\alpha^{29i_1+t}\beta^2:t\in \langle d\rangle\}. $$
  Therefore
  $$ \psi_0(\langle a^{i_1}b^2,a^{i_2}b^2\rangle_{\mathsf{rk}}) = \langle \alpha^{29i_1}\beta^2,\alpha^{29i_2}\beta^2\rangle_{\mathsf{rk}} = \langle \psi_0(a^{i_1}b^2),\psi_0(a^{i_2}b^2)\rangle_{\mathsf{rk}}. $$

  \medskip
  The six cases above exhaust all possibilities for pairs of elements of $G_0$. Hence $\psi_0$ satisfies the hypothesis of Proposition~\ref{prop:pairs}. Therefore $\psi_0$ induces a lattice isomorphism from $\mathcal{R}(G_0)$ to $\mathcal{R}(H_0)$. Combining this with the first part of the proof, we obtain Proposition~\ref{prop:ex}.

\end{proof}

\paragraph{Author's note.}
The author used a large language model for assistance with exposition and language during the preparation of this manuscript. The author verified all mathematical content and takes full responsibility for the results and presentation.


\section*{Appendix}

The following GAP code uses $G$, $H$ in place of $G_0$, $H_0$.
\begin{lstlisting}[
  language=GAP,
  basicstyle=\ttfamily\footnotesize,
  breaklines=true,
  columns=fullflexible,
  keepspaces=true,
  aboveskip=3pt,
  belowskip=3pt
]
############################################################
# Split metacyclic groups
#   G = <a,b | a^91=1, b^3=1, b*a*b^-1 = a^9>
#   H = <alpha,beta | alpha^91=1, beta^3=1,
#                      beta*alpha*beta^-1 = alpha^16>
############################################################

F := FreeGroup("a","b");;
fa := F.1;;
fb := F.2;;

Gfp := F / [ fa^91, fb^3, fb*fa*fb^-1*fa^-9 ];;

qa := Gfp.1;;
qb := Gfp.2;;

isoG := IsomorphismPcGroup(Gfp);;
G := Image(isoG);;
aG := Image(isoG, qa);;
bG := Image(isoG, qb);;

F2 := FreeGroup("alpha","beta");;
falpha := F2.1;;
fbeta  := F2.2;;

Hfp := F2 / [ falpha^91, fbeta^3, fbeta*falpha*fbeta^-1*falpha^-16 ];;

qalpha := Hfp.1;;
qbeta  := Hfp.2;;

isoH := IsomorphismPcGroup(Hfp);;
H := Image(isoH);;
alphaH := Image(isoH, qalpha);;
betaH  := Image(isoH, qbeta);;

Print("Size(G) = ", Size(G), "\n");
Print("Size(H) = ", Size(H), "\n");

############################################################
# Normal-form constructors: a^i b^j and alpha^i beta^j
############################################################

Mod91 := function(i)
    return i mod 91;
end;;

Mod3 := function(j)
    return j mod 3;
end;;

EltG := function(i,j)
    return aG^Mod91(i) * bG^Mod3(j);
end;;

EltH := function(i,j)
    return alphaH^Mod91(i) * betaH^Mod3(j);
end;;

############################################################
# Enumerate all elements in normal form
############################################################

pairsG := [];; eltsG := [];;

for j in [0..2] do
    for i in [0..90] do
        Add(pairsG, [i,j]);
        Add(eltsG, EltG(i,j));
    od;
od;

pairsH := [];; eltsH := [];;

for j in [0..2] do
    for i in [0..90] do
        Add(pairsH, [i,j]);
        Add(eltsH, EltH(i,j));
    od;
od;

NF_G := function(g)
    local pos;
    pos := Position(eltsG, g);
    if pos = fail then
        Error("Element not found in G.");
    fi;
    return pairsG[pos];
end;;

NF_H := function(h)
    local pos;
    pos := Position(eltsH, h);
    if pos = fail then
        Error("Element not found in H.");
    fi;
    return pairsH[pos];
end;;

############################################################
# Orbits modulo 91
############################################################

OrbitMod := function(r, u, m)
    local x, orb;
    x := u mod m;
    orb := [];
    repeat
        AddSet(orb, x);
        x := (r * x) mod m;
    until x = u mod m;
    Sort(orb);
    return orb;
end;;

############################################################
# Orbits inside a fixed gcd-layer
############################################################

AllOrbitsModInLayer := function(r, m, d)
    local seen, orbs, u, orb;
    seen := [];
    orbs := [];

    for u in [0..m-1] do
        if Gcd(u,m) = d and not u in seen then
            orb := OrbitMod(r, u, m);;
            orb := Filtered(orb, x -> Gcd(x,m) = d);;
            Sort(orb);
            Add(orbs, orb);
            UniteSet(seen, orb);
        fi;
    od;

    Sort(orbs);
    return orbs;
end;;

############################################################
# Define expmap on Z/91Z by matching 9-orbits with 16-orbits
# inside each gcd-layer d in {0,1,7,13}
############################################################

expmap := [];;

for d in [0,1,7,13] do
    if d = 0 then
        expmap[1] := 0;
    else
        orbs9d  := AllOrbitsModInLayer(9, 91, d);;
        orbs16d := AllOrbitsModInLayer(16, 91, d);;

        Print("layer d = ", d,
              " : #9-orbits = ", Length(orbs9d),
              " , #16-orbits = ", Length(orbs16d), "\n");

        if Length(orbs9d) <> Length(orbs16d) then
            Error("Orbit counts do not match in layer d = ", d);
        fi;

        for t in [1..Length(orbs9d)] do
            O9  := orbs9d[t];
            O16 := orbs16d[t];

            if Length(O9) <> Length(O16) then
                Error("Orbit lengths do not match in layer d = ", d,
                      ", orbit number ", t);
            fi;

            for k in [1..Length(O9)] do
                expmap[O9[k] + 1] := O16[k];
            od;
        od;
    fi;
od;

Print("Size(Set(expmap)) = ", Size(Set(expmap)), "\n");

bad := Filtered([0..90], u -> Gcd(u,91) <> Gcd(expmap[u+1],91));;
Print("Number of bad u = ", Length(bad), "\n");
if Length(bad) > 0 then
    Print(bad, "\n");
fi;

############################################################
# Define the bijection f : G -> H
#
# On <a>:
#   a^u |-> alpha^(expmap(u))
#
# On <a>b:
#   a^i b |-> alpha^i beta
#
# On <a>b^2:
#   a^k b^2 |-> alpha^(29 k) beta^2
############################################################

fPair := function(p)
    local i, j;
    i := p[1];
    j := p[2];

    if j = 0 then
        return [ expmap[i+1], 0 ];
    elif j = 1 then
        return [ i, 1 ];
    elif j = 2 then
        return [ (29*i) mod 91, 2 ];
    else
        Error("j must be 0, 1, or 2.");
    fi;
end;;

fElt := function(g)
    local p, q;
    p := NF_G(g);
    q := fPair(p);
    return EltH(q[1], q[2]);
end;;

############################################################
# Sanity checks for the bijection
############################################################

img0 := List([0..90], i -> fElt(EltG(i,0)));;
img1 := List([0..90], i -> fElt(EltG(i,1)));;
img2 := List([0..90], i -> fElt(EltG(i,2)));;

Print("Size(Set(img0)) = ", Size(Set(img0)), "\n");
Print("Size(Set(img1)) = ", Size(Set(img1)), "\n");
Print("Size(Set(img2)) = ", Size(Set(img2)), "\n");

Print("Size(Set(Concatenation(img0,img1,img2))) = ",
      Size(Set(Concatenation(img0,img1,img2))), "\n");

############################################################
# Rack operation and subrack generation
############################################################

RackOp := function(x,y)
    return x * y * x^-1;
end;;

SubrackGenerated := function(lst)
    local S, changed, x, y, z;

    S := Set(lst);
    changed := true;

    while changed do
        changed := false;

        for x in S do
            for y in S do
                z := RackOp(x,y);
                if not z in S then
                    AddSet(S,z);
                    changed := true;
                fi;
            od;
        od;
    od;

    return S;
end;;

ImageUnderf := function(S)
    return Set(List(S, fElt));
end;;

############################################################
# Test whether f preserves all pair-generated subracks
############################################################

PairTest := function()
    local x, y, SX, SY, count, total;

    count := 0;
    total := Size(G)^2;

    for x in eltsG do
        for y in eltsG do
            SX := SubrackGenerated([x,y]);
            SY := SubrackGenerated([fElt(x), fElt(y)]);

            if ImageUnderf(SX) <> SY then
                Print("\nFailure found.\n");
                Print("x in G normal form = ", NF_G(x), "\n");
                Print("y in G normal form = ", NF_G(y), "\n");
                Print("Size(<x,y>) = ", Size(SX), "\n");
                Print("Size(<f(x),f(y)>) = ", Size(SY), "\n");
                return fail;
            fi;

            count := count + 1;
            if count mod 5000 = 0 then
                Print("Checked ", count, " / ", total, " pairs\n");
            fi;
        od;
    od;

    Print("\nAll ", total, " ordered pairs passed.\n");
    return true;
end;;

############################################################
# Final test
############################################################

PairTest();
\end{lstlisting}

\end{document}